\numberwithin{equation}{section}
\newtheorem{theorem}{Theorem}[section]
\newtheorem{lemma}[theorem]{Lemma}
\newtheorem{corollary}[theorem]{Corollary}
\newtheorem{proposition}[theorem]{Proposition}
\newtheorem{definition}[theorem]{Definition}
\theoremstyle{definition}
\newtheorem{remark}[theorem]{Remark}
\numberwithin{equation}{section}
\newcounter{alphabet}
\newcounter{tmp}
\newenvironment{Thm}[1][]{\refstepcounter{alphabet}%
	\bigskip%
	\noindent%
	{\bf Theorem \Alph{alphabet}}%
	\ifthenelse{\equal{#1}{}}{}{ (#1)}%
	{\bf .} \itshape}{\vskip 8pt}
\begin{document}
	\author[Vibhuti Arora]{Vibhuti Arora}
	\address{Vibhuti Arora, Department of Mathematics, National Institute of Technology Calicut, 673 601, India.}
	\email{vibhutiarora1991@gmail.com, vibhuti@nitc.ac.in}
	
	\author[Vinayak M.]{Vinayak M.}
	\address{Vinayak M., Department of Mathematics, National Institute of Technology Calicut, 673 601, India.}
	\email{mvinayak.math@gmail.com, vinayak\_p220286ma@nitc.ac.in}

	\begin{abstract}
		This paper introduces the second Bohr radius for vector-valued holomorphic functions defined on arbitrary complete Reinhardt domains. We aim to establish the lower and upper bounds of the second Bohr radius in both finite and infinite-dimensional settings. 
		Additionally, we provide specific estimates that connect the Second Bohr radius to a symmetric Banach space. We also explore the relationships between our findings and certain existing results.
	\end{abstract}
	\title[Estimates of the Second Bohr radius]
	{Estimates of the Second Bohr radius for vector-valued Holomorphic functions}

	\newcommand{\D}{{\mathbb D}}
	\newcommand{\C}{{\mathbb C}}
	\newcommand{\real}{{\operatorname{Re}\,}}
	\newcommand{\Log}{{\operatorname{Log}\,}}
	\newcommand{\Arg}{{\operatorname{Arg}\,}}
	\newcommand{\ds}{\displaystyle}
	
	\keywords{Analytic function, Banach spaces, Bohr phenomena, Vector-valued functions}  
	\subjclass[2020]{32A05, 32A10, 46B45, 46E40}

	\maketitle
	\section{Introduction}
	In 1914, Harald Bohr \cite{B014} started studying the absolute convergence of the well-known Dirichlet series and linked it with the theory of analytic function $f$ defined on the unit disk $\mathbb{D}:=\{z \in \mathbb{C}: |z|<1 \}$, such that $|f(z)|\leq 1$ and having the series expansion
	\begin{equation*}
		f(z) = \sum\limits_{k=0}^{\infty} a_k z^k. 
	\end{equation*}
	Bohr initially proved the following inequality 
	\begin{equation}\label{E3.1.2}
		\sum\limits_{k=0}^{\infty} |a_k|r^k \leq 1, 
	\end{equation}
	for $r \leq 1/6$, and it was not sharp. Later, Riesz, Schur, and Wiener independently obtained for $|z|=r\leq 1/3$. \eqref{E3.1.2} is called the {\em Bohr inequality} and the largest radius $1/3$ which holds the inequality \eqref{E3.1.2} is called the {\em Bohr radius}.
	
	\medskip
	
	Researchers in Banach algebra have discovered an intriguing link between operator algebra and the famous von Neumann inequality.
	By considering the Banach algebra $X_\beta=l^1(\mathbb{N}), ~ 0<\beta \leq 1/3$, of all summable sequences with the norm $\|x\|:=\beta^{-1}\|x\|_1$, Dixon \cite{DI95} showed that $X_\beta$ is not an operator algebra, but is a non-unital Banach algebra which satisfies the von Neumann inequality. The result was obtained using the Bohr inequality, which opened a new path for many mathematicians to explore local Banach space theory. Later,  Paulsen et al. \cite{PA02} extended the theory of the Bohr inequality to the Banach algebra of bounded analytic functions, and certain multi-dimensional domains. Investigating the Bohr inequality for certain operators was another area of interest in this field. In particular, the study of the Bohr inequality connected with several integral operators can be seen in \cite{KA19, SH21}. A comprehensive survey of recent trends in this area over the past decades can be found in the article \cite{MU16}. For insights into other recent works in this area, see \cite{VI24, AR23, BH18, KA17, AL19, KPW25} and the references cited within them.
	
	\medskip
	
	The notion of power series can be extended to the multidimensional space $$\mathbb{C}^n= \{(z_1,\dots,z_n): z_k \in \mathbb{C}, ~ k=1,\dots,n\}, ~ n>1$$ in a natural way. 
	Before going to the case, we need some prerequisites. Throughout the discussion, we use the multi-index notation $\alpha= (\alpha_1, \dots, \alpha_n), ~\alpha_k \in \mathbb{N}_0:=\mathbb{N} \cup \{0\}, ~ k=1, \dots,n$ and $\bar{0} \in \mathbb{C}^n$ means $(0,\dots,0)$. The operation $z^\alpha$, for $z \in \mathbb{C}^n$ is defined by $ z^\alpha := z_1^{\alpha_1} \cdots z_n^{\alpha_n}.$ Also, $|\alpha|:= \alpha_1+ \cdots +\alpha_n$, and $\alpha ! := \alpha_1 ! \cdots \alpha_n !$. A domain $\Omega$ is said to be {\em Reinhardt} if for any $z=(z_1, \dots, z_n) \in \Omega$, then we have $(\lambda_1 z_1, \dots, \lambda_nz_n) \in \Omega$, where $|\lambda_k|=1, ~ k=1, \dots, n.$ The domain $\Omega$ is said to be {\em complete Reinhardt} if for any $z=(z_1, \dots, z_n) \in \Omega$, then we have $(\lambda_1 z_1, \dots ,\lambda_nz_n) \in \Omega$, where $|\lambda_k|\leq 1, ~ k=1, \dots ,n.$ 
	
	\medskip
	
	Now we recall the concept of power series about $\bar{0} \in \mathbb{C}^n$, of a Banach space-valued function $f$; that is, the series of the form
	\begin{equation}\label{E3.1.3}
		f(z) : = \sum_{\alpha \in {\mathbb{N}_0}^n} a_\alpha z^\alpha = \sum_{\alpha \in {\mathbb{N}_0}^n} a_{(\alpha_1,\dots, \alpha_n)} z_1^{\alpha_1} \cdots z_n^{\alpha_n},
	\end{equation}
	where $a_\alpha \in X$, $X$ is an arbitrary Banach space.
	The notion of Fr\'echet differentiability generalizes the idea of classical differentiation to any normed linear space. For two normed spaces $X$ and $Y$ over $\mathbb{C}$ and a function $f:A \to Y$, where $A$ open in $X$, we say $f$ is {\em Fr\'echet differentiable} at $x \in A$ if there exists $T_x \in BL(X,Y)$ (the space of all bounded linear functions from $X$ to $Y$) satisfying the condition
	
	\begin{equation*}
		\dfrac{f(x+k)-f(x)-T_x(k)}{\|k\|} \to 0 ~ \text{as} ~ k \to 0.
	\end{equation*}
	Here $T_x$ is called the {\em differential} of $f$ at the point $x$. If $f$ is Fr\'echet differentiable at each point in $A$, then we say $f$ is {\em holomorphic on A}. See \cite[p. 354]{DE19} for a detailed discussion on this topic. 
	
	\medskip
	
	Analogous to the classical result by Bohr in single variable, Boas and Khavinson \cite{BO97} started discussing the Bohr radius $K_n$ for the complex-valued holomorphic functions defined on polydisk $\mathbb{D}^n :=\{(z_1, \dots z_n) \in \mathbb{C}^n : |z_k|<1, ~ k=1, \dots, n\},$ and obtained the following bounds for the case $n>1$:
	\begin{equation*}
		\dfrac{1}{3\sqrt{n}}<K_n<2 \cdot \sqrt{\dfrac{\log n}{n}}.
	\end{equation*}
	Later, Boas \cite{BO00} generalized this discussion to the unit ball $B_p^n : =\{z=(z_1, \dots z_n) \in \mathbb{C}^n : \|z\|_p<1\}$, where
	\begin{align*}
		\|z\|_p := \begin{cases}
			\left(\sum\limits_{k=1}^n |z_k|^p\right)^{1/p}, & ~ 1 \leq p < \infty, \\
			\max\{|z_k|, ~ k=1, \dots, n\}, & ~ p=\infty.
		\end{cases}
	\end{align*}

	\noindent But so far, apart from the bounds for the $n$-dimensional Bohr radius, no exact value has been obtained. Recently, Blasco extended the discussion of the Bohr radius to any arbitrary Banach space \cite{BL10}. Current research trends regarding these area can be found in \cite{AR23, DA20, BA14, SH23, DE18, DE219, LI21, PO19, SH123, DE11, AI08, AKP25, DE06, JI25, LI23}.
	
	\medskip

	Aizenberg \cite{AI00} introduced a new concept called the {\em second Bohr radius} for a bounded complete Reinhardt domain $\Omega \subseteq \mathbb{C}^n$, which is denoted by $B_n(\Omega)$. For an analytic function $f:\Omega \to \mathbb{D}$ with the series expansion \eqref{E3.1.3}, $B_n(\Omega)$ is the largest radius $r$ such that 
	\begin{equation*}
		\sum_{\alpha \in {\mathbb{N}_0}^n} \sup_{r\Omega} |c_\alpha z^\alpha| <1,
	\end{equation*}
	where $r\Omega:= \{(rz_1,\dots,rz_n) : (z_1,\dots,z_n) \in \Omega\}$. Note that since $$\sup \limits _{r\Omega} \sum_{\alpha \in {\mathbb{N}_0}^n}|a_\alpha z^\alpha| \leq \sum_{\alpha \in {\mathbb{N}_0}^n} \sup_{r\Omega}|a_\alpha z^\alpha|,$$ the second Bohr radius gives a lower bound for the classical Bohr radius in multidimensional case. Later, Boas \cite{BO00} derived this concept to the unit ball using any $p$ norm, which gives the estimates for the second Bohr radius in the context of $\Omega=B_p^n$ and obtained the following result.
	
	\begin{Thm}
		For $n>1$, the second Bohr radius $B_n(B_p^n)$ satisfies the following bounds:
		\begin{enumerate}[label=(\alph*)]
			\item For $1\leq p \leq 2,$
			\begin{equation*}
				\dfrac{1}{3n}<  1-\sqrt[n]{\dfrac{2}{3}} \leq B_n(B_p^n) <4 \left(\dfrac{\log n}{n}\right).
			\end{equation*}
			\item For $2\leq p \leq \infty,$
			\begin{equation*}
				\dfrac{1}{3} \left(\dfrac{1}{n}\right)^{1/2+1/p}\leq B_n(B_p^n) <4 \left(\dfrac{\log n}{n}\right)^{1/2+1/p}.
			\end{equation*}
		\end{enumerate}
	\end{Thm}
	In 2004, Defant et al. \cite{DE04} estimated the second Bohr radius for any complete Reinhardt domain, resulting in the following conclusion.
	
	\begin{Thm}
		Let $\Omega$ be a complete Reinhardt domain in $\mathbb{C}^n$. Then we have
		
		\begin{equation*}
			\dfrac{1}{3} \max \left(\dfrac{1}{n}, \dfrac{1}{\sqrt{n} S(\Omega, B_\infty^n) S(B_\infty^n,\Omega)}\right) \leq B_n(\Omega) \leq 2^{3/2} \sqrt{\log n}  \cdot e^3 \dfrac{S(\Omega, B_2^n)}{n},
		\end{equation*}
		where $S(\Omega_1, \Omega_2)$ is defined as
		\begin{equation}\label{D3.1.3}
			S(\Omega_1, \Omega_2) := \inf \{\beta>0 : \Omega_1 \subset \beta \cdot\Omega_2\},
		\end{equation}
		for any two complete Reinhardt domains $\Omega_1$ and $\Omega_2$.
	\end{Thm}
	Defant et al. \cite{DE12} recently introduced the theory of $\lambda$-Bohr radius for vector-valued holomorphic functions on  $\mathbb{D}^n$, defined as follows: Let $T: X \to Y$ be a bounded linear operator with $\| T \rVert \leq \lambda$ and $f: \mathbb{D}^n \to X$ be a holomorphic function with the series expansion \eqref{E3.1.3}. The {\em $\lambda$-Bohr radius of $T$}, denoted by $K(\mathbb{D}^n, T, \lambda)$ is the largest $r\geq 0$ such that the inequality
	\begin{equation*}
		\sup _{ z \in r\mathbb{D}^n} \sum_{\alpha \in \mathbb{N}_0^n}  \| T(a_\alpha)z^\alpha\rVert_Y \leq \lambda \sup_{z \in \mathbb{D}^n} \|f(z)\|_X=\lambda \|f\|_\infty.
	\end{equation*}
	It is to be noted that the same definition can be extended to an arbitrary bounded complete Reinhardt domain in $\mathbb{D}^n$. Recently, Kumar et al. \cite{SH23} extended this to the functions defined on $B_p^n$ for any $p \in [1,\infty]$. We aim to define the analogous concept of the $\lambda$-second Bohr radius for vector-valued holomorphic functions.

	\medskip
	
	Das \cite{DA23} has recently established a logarithmic lower bound for the second Bohr radius of functions defined on $B_p^n$. To the best of our knowledge, except for the works in \cite{BO00, AI00, DA23, DE04}, the estimation of the second Bohr radius has not been studied. Our main objective is to define the analogous version of the second Bohr radius, similar to the vector-valued Bohr radius defined in \cite{DE12}.
	
	\medskip

	The paper is organized as follows. In Section 2, we will first define the $\lambda$-second Bohr radius for vector-valued holomorphic functions in several variables and present our main results. After that, in Section 3, we present some preliminary results that will help us to prove our main theorems. Section 4 is dedicated to providing proofs of our main results.
	
	\medskip
	\section{Definitions and Main Results}
	We start this section with the definition of the $\lambda$-second Bohr radius. We assume that $X$ and $Y$ denote two arbitrary Banach spaces defined over a complex field. Furthermore, we denote $\Omega$ as a bounded complete Reinhardt domain in $\mathbb{C}^n$.
	\begin{definition}\label{D1}
		Let $T: X \to Y$ be a non-null bounded linear operator with $\| T \rVert \leq \lambda$. The $\lambda$-second Bohr radius of $T$, denoted by $B(\Omega, T, \lambda)$ is the largest $r\geq 0$ such that the inequality
		\begin{equation}\label{E3.2.1}
			\sum_{\alpha \in \mathbb{N}_0^n} \sup _{z \in r \cdot \Omega} \| T(a_\alpha)z^\alpha\rVert_Y \leq \lambda \sup_{z \in \Omega}\| f(z) \rVert_X
		\end{equation}
		holds for all $X$-valued analytic functions $f$ defined on $\Omega$ with the series expansion \eqref{E3.1.3}. We denote $B(\Omega,T,\lambda)=:B(\Omega,X,\lambda)$, if $T$ is the identity on $X$.
	\end{definition}
	\begin{remark}
		If $f$ is an unbounded function, then the inequality \eqref{E3.2.1} will hold trivially. So throughout the discussion, we assume $f$ to be a bounded analytic function. Also note that  Definition \ref{D1} of $B(\Omega,\mathbb{C},1)$ coincides with the second Bohr radius defined in \cite{AI00}.
	\end{remark}
	It is to be noted that every analytic function in several variables is closely connected with homogeneous polynomials. More precisely, suppose that $f$ is of the form \eqref{E3.1.3}. Then $f$ can also be expressed as
	$$f(z) = \sum_{m=0}^\infty \sum_{|\alpha|=m}  a_\alpha z^\alpha = \sum_{m=0}^\infty P_m(z),$$
	where $P_m$ is the $m$-homogeneous polynomial given by
	\begin{equation}\label{E3.2.2P}
		P_m(z) = \sum_{|\alpha|=m}  a_\alpha z^\alpha.
	\end{equation}
	The study of Bohr radius for the space of $m$-homogeneous polynomials and its relation with the Bohr radius for arbitrary analytic functions can be seen in \cite{DE03, DE12}.
	We now define the second Bohr radius $B_m(\Omega, T, \lambda)$ for $m$-homogeneous polynomials as follows.
	\begin{definition}
		Let $T: X \to Y$ be a non-null bounded linear operator with $\| T \rVert \leq \lambda$. The $\lambda$-second Bohr radius for $m$-homogeneous polynomial, denoted by $B_m(\Omega, T, \lambda)$ is the largest $r\geq 0$ such that the inequality
		\begin{equation*}
			\sum_{|\alpha|=m} \sup _{z \in r \cdot \Omega} \| T(a_\alpha)z^\alpha\rVert_Y \leq \lambda \sup_{z \in \Omega}\| P_m(z) \rVert_X
		\end{equation*}
		holds for all $X$-valued $m$-homogeneous polynomials $P_m$ defined on $\Omega$, which has the representation \eqref{E3.2.2P}. We denote $B_m(\Omega,T,\lambda)=B_m(\Omega,X,\lambda)$, if $T$ is the identity on $X$. 
		
	\end{definition}
	
	\begin{remark}\label{R3.2.1} 
		Note that similar notation as in Table 1 can be provided for $B_m(\Omega, T,\lambda)$ under the same conditions. Now, observe that from the definition itself, we have the relation 
		$$\inf_{m \in \mathbb{N}}B_m(\Omega, T, \lambda) \geq B(\Omega, T, \lambda).$$ 
		Also note that $K(\Omega, T, \lambda) \geq B(\Omega, T, \lambda),$ due to the fact that 
		$$ \sup _{z \in r \cdot \Omega} \sum_{\alpha \in \mathbb{N}_0^n} \| T(a_\alpha)z^\alpha\rVert_Y  \leq  \sum_{\alpha \in \mathbb{N}_0^n} \sup _{z \in r \cdot \Omega} \| T(a_\alpha)z^\alpha\rVert_Y.$$  In particular, suppose we take $\Omega = \mathbb{D}^n$. Then due to the fact that for each $k$, $z_k$ simultaneously maximizes the value of $|z^\alpha|$ for all indices $\alpha$, we have 
		$$ \sup _{\| z \rVert_\infty <r} \sum_{\alpha \in \mathbb{N}_0^n} \| T(a_\alpha)z^\alpha\rVert_Y  =  \sum_{\alpha \in \mathbb{N}_0^n} \sup _{\| z \rVert_\infty <r} \| T(a_\alpha)z^\alpha\rVert_Y. $$
		This concludes the following result.
	\end{remark}
	
	\begin{proposition}\label{3P1}
		The first and second $\lambda$-Bohr radii of the unit polydisk in $\mathbb{C}^n$ are the same. That is, 
		$K(\mathbb{D}^n, T, \lambda) = B(\mathbb{D}^n, T, \lambda).$ In particular, $B(\mathbb{D},\mathbb{C},1)=1/3$.
	\end{proposition}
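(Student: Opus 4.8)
The plan is to read off both assertions directly from the structural identity already recorded in Remark \ref{R3.2.1}, so that the proof reduces to matching the two defining inequalities term by term. Recall that $K(\mathbb{D}^n, T, \lambda)$ is the largest $r$ for which
$$\sup_{z \in r\mathbb{D}^n} \sum_{\alpha \in \mathbb{N}_0^n} \|T(a_\alpha)z^\alpha\|_Y \leq \lambda \|f\|_\infty$$
holds for every bounded $X$-valued holomorphic $f$ on $\mathbb{D}^n$, whereas $B(\mathbb{D}^n, T, \lambda)$ is defined by the same right-hand side bounding $\sum_{\alpha} \sup_{z \in r\mathbb{D}^n} \|T(a_\alpha)z^\alpha\|_Y$. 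Thus the only difference between the two defining conditions is the position of the supremum relative to the sum over $\alpha$, and establishing the claim means showing these two left-hand sides agree.

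First I would invoke (or justify afresh) the crucial polydisk equality from Remark \ref{R3.2.1}. For $\|z\|_\infty < r$ one has $\|T(a_\alpha)z^\alpha\|_Y = \|T(a_\alpha)\|_Y \prod_{k=1}^n |z_k|^{\alpha_k}$, so the supremum of each summand equals $\|T(a_\alpha)\|_Y r^{|\alpha|}$, approached as $|z_1| = \cdots = |z_n| \to r$. The key point is that this extremizing boundary configuration is common to all summands, independent of $\alpha$; since every term is simultaneously extremized at the same point, the supremum of the sum coincides with the sum of the suprema, i.e.
$$\sup_{\|z\|_\infty < r} \sum_{\alpha} \|T(a_\alpha)z^\alpha\|_Y = \sum_{\alpha} \sup_{\|z\|_\infty < r} \|T(a_\alpha)z^\alpha\|_Y.$$
Consequently the left-hand sides of the two defining inequalities are literally equal, the right-hand sides are identical, and the sets of admissible $r$ coincide. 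Taking the largest such $r$ in each case yields $K(\mathbb{D}^n, T, \lambda) = B(\mathbb{D}^n, T, \lambda)$.

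For the displayed special value I would specialize to $n = 1$, $X = Y = \mathbb{C}$, and $T = \operatorname{id}$ (so $\lambda = 1$). The equality just proved gives $B(\mathbb{D}, \mathbb{C}, 1) = K(\mathbb{D}, \mathbb{C}, 1)$, and the defining condition collapses to $\sum_{k=0}^\infty |a_k| r^k \leq \|f\|_\infty$ for every bounded analytic $f(z) = \sum_k a_k z^k$, which is exactly the classical Bohr inequality \eqref{E3.1.2} after the normalization $\|f\|_\infty \leq 1$. The sharp radius here is precisely the classical Bohr radius, so invoking the Riesz–Schur–Wiener value gives $B(\mathbb{D}, \mathbb{C}, 1) = 1/3$. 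Since the decisive identity is already furnished by Remark \ref{R3.2.1}, I expect no genuine obstacle; the argument is a matter of unwinding the two definitions, and the only external ingredient is the known one-variable value $1/3$, which I would quote rather than reprove.
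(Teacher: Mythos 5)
Your proposal is correct and follows essentially the same route as the paper: the paper derives this proposition directly from Remark \ref{R3.2.1}, using exactly your observation that on the polydisk every monomial $|z^\alpha|$ is simultaneously maximized by letting all $|z_k| \to r$, so the supremum of the sum equals the sum of the suprema and the two defining conditions coincide, with the value $1/3$ then read off from the classical Bohr radius. Your write-up is if anything slightly more careful (spelling out the extremizing configuration and the reduction of $B(\mathbb{D},\mathbb{C},1)$ to the Riesz--Schur--Wiener result), whereas for the infinite sum a fully rigorous equality would pass through finite partial sums, a routine step the paper also leaves implicit.
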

	In \cite[Theorem 1.2]{BL10}, Blasco showed that the classical Bohr radius for the function which takes values on $B_p^m$ is zero for $m \geq 2, ~1 \leq p \leq \infty$. Here we will give an analogous result for the $\lambda$-second Bohr radius for functions with domain $\mathbb{D}^n$. The result is as follows.
	\begin{theorem}\label{3Tm1}
		For $1 \leq p \leq \infty$, $n\geq 1, ~ \text{and} ~  m\geq 2$, let $f:\mathbb{D}^n \to \overline{B_p^m}$ be an analytic function with the series expansion \eqref{E3.1.3}. Then $B(\mathbb{D}^n, \overline{B_p^m},1) =0$.
	\end{theorem}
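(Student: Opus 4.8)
The plan is to prove that the radius vanishes by showing that the defining inequality \eqref{E3.2.1}, with $T$ the identity and $\lambda=1$, fails for \emph{every} radius $r\in(0,1)$: for each such $r$ I will produce an analytic $f:\mathbb{D}^n\to\overline{B_p^m}$ whose coefficient sum $\sum_{\alpha}\sup_{z\in r\mathbb{D}^n}\|a_\alpha z^\alpha\|_p$ strictly exceeds $\sup_{z\in\mathbb{D}^n}\|f(z)\|_p\le 1$. Since no positive $r$ can then satisfy the inequality for all admissible $f$, the largest such $r$ must be $0$.

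First I would reduce the dimension of the domain to $n=1$. If $g:\mathbb{D}\to\overline{B_p^m}$ is a one–variable function violating the inequality at radius $r$, with coefficients $b_k$, then its lift $f(z_1,\dots,z_n):=g(z_1)$ is an $\overline{B_p^m}$–valued analytic function on $\mathbb{D}^n$ with $\sup_{\mathbb{D}^n}\|f\|_p=\sup_{\mathbb{D}}\|g\|_p$ and with nonzero coefficients only at indices $\alpha=(k,0,\dots,0)$. Because $\sup_{z\in r\mathbb{D}^n}\|a_{(k,0,\dots,0)}z_1^k\|_p=\|b_k\|_p\,r^k$, both quantities entering \eqref{E3.2.1} are unchanged by the lift, so the violation transfers verbatim. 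Hence it suffices to treat $n=1$, and by the same embedding $\overline{B_p^2}\times\{0\}^{m-2}\hookrightarrow\overline{B_p^m}$ it suffices to treat $m=2$.

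For $n=1$ the supremum over $r\mathbb{D}$ of $\|a_k z^k\|_p$ equals $\|a_k\|_p r^k$, so $B(\mathbb{D},\overline{B_p^m},1)$ coincides with the classical (first) Bohr radius of $\overline{B_p^m}$–valued functions; this is also Proposition~\ref{3P1} specialised to one variable. The vanishing of that classical radius for every $p\in[1,\infty]$ and $m\ge 2$ is precisely Blasco's \cite[Theorem 1.2]{BL10}, which I would invoke to conclude. The mechanism behind it is that the violating functions can be taken with their scalar components peaking near disjoint boundary arcs, so that $\|f(z)\|_p\le 1$ pointwise while the coefficient norms $\|a_k\|_p$ accumulate; summing sufficiently many such ``bumps'' forces $\sum_k\|a_k\|_p r^k>1$ for any prescribed $r$.

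The genuinely delicate point is this last construction. For $p=\infty$ it is immediate: $f(z)=(1,z,0,\dots,0)$ already has coefficient sum $1+r>1=\sup_{\mathbb{D}}\|f\|_\infty$ for every $r>0$, since the maximum norm absorbs the second component. For finite $p$, however, each active component contributes to $\|f\|_p$, so one must place the components near disjoint arcs of the circle and balance the number of bumps against their angular spread so that $\sup_{\mathbb{D}}\|f\|_p$ stays at $1$ while the coefficient sum diverges; this is where the quantitative work lies. Invoking Blasco's theorem packages exactly these estimates, leaving only the two routine steps: the dimensional lift above, and the identification of the one–variable second radius with the classical Bohr radius.
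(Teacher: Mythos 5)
Your proposal is correct, but it takes a genuinely different route from the paper. Your reductions are all sound: lifting a one-variable violator $g$ to $f(z_1,\dots,z_n):=g(z_1)$ changes neither side of \eqref{E3.2.1}; the embedding $\overline{B_p^2}\times\{0\}^{m-2}\hookrightarrow\overline{B_p^m}$ is isometric and coefficient-preserving; and in one variable the second radius coincides with the classical one because $\sup_{|z|<r}\|a_kz^k\|=\|a_k\|r^k$ for every $k$ simultaneously. Hence invoking Blasco's Theorem 1.2 of \cite{BL10} legitimately finishes the argument, and there is no circularity, since Blasco's result is proved independently of this paper. The paper, however, argues in the opposite direction: it gives a self-contained proof directly on $\mathbb{D}^n$ using explicit degree-one violators, and only afterwards remarks that Theorem \ref{3Tm1} together with Proposition \ref{3P1} \emph{recovers} Blasco's theorem. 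Concretely, for $p=\infty$ the paper takes $f(z)=(1,z_2,0,\dots,0)$, exactly as you do; for $1<p<\infty$ it takes $f(z)=(1-\mu)^{1/p}e_1+\mu^{1/p}e_2z_2$, where, given $\epsilon>0$, a mean value theorem estimate supplies $\mu\in(0,1)$ with $1-(1-\mu)^{1/p}<\epsilon\mu^{1/p}$, so the coefficient sum at radius $\epsilon$ is $(1-\mu)^{1/p}+\epsilon\mu^{1/p}>1\ge\sup_{z\in\mathbb{D}^n}\|f(z)\|_p$; and for $p=1$ it takes the rotated pair $f(z)=\tfrac{\sqrt{1-\eta}}{2}(1,1,0,\dots,0)+\tfrac{\sqrt{\eta}}{2}(1,-1,0,\dots,0)z_2$, bounding $\|f\|_1$ by the quadratic mean. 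One caveat about your heuristic: the picture of ``many bumps peaking near disjoint boundary arcs'' whose coefficient norms accumulate is not the mechanism actually needed here --- in both the paper's proof and the underlying construction, a \emph{single} two-term linear polynomial $x+yz_2$ suffices for every $p$, because for $m\ge 2$ one can split mass between two coordinates so that the norm is strictly subadditive pointwise while the coefficient sum adds in full. This imprecision does not create a gap, since you delegate the quantitative content to the citation. In sum, your route buys brevity at the cost of an external black box; the paper's route buys self-containedness, and in particular supports its subsequent remark deriving $K(\mathbb{D},\mathbb{C}^m,1)=0$ from the theorem --- a remark that would become vacuous under your derivation, since it would merely return the cited result.
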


	\begin{remark}
		Note that by Proposition \ref{3P1}, we have $B(\mathbb{D}^n,X,1)=K(\mathbb{D}^n,X,1)$, for any Banach space $X$. Then by Theorem \ref{3Tm1}, we directly get that the classical vector-valued Bohr radius for functions from $\mathbb{D}^n$ to $\overline{B_p^m}$, which is denoted by  $K(\mathbb{D}^n,\overline{B_p^m},1)=0, ~ 1 \leq p \leq \infty$. This shows the importance of considering the extra term $\lambda$ in the RHS of inequality \eqref{E3.2.1} in the definition of the $\lambda$-second Bohr radius. Moreover,  we get $K(\mathbb{D},\mathbb{C}^m,1)=0$ (by changing the role of $z_k$ to the variable $z \in \mathbb{D}$ in the proof), which coincides with Theorem 1.2 of \cite{BL10}.
	\end{remark}
	
	Now, we state the following result, which gives a lower bound for the $\lambda$-second Bohr radius. Moreover, the following result shows that for every $\lambda>1$ such that $\|T\|<\lambda$, the $\lambda$-second Bohr radius of $T$ is always positive.
	\begin{theorem}\label{3T1}
		For $1 \leq p \leq \infty$, let $f:B_p^n \to X$ be an analytic function with the series expansion \eqref{E3.1.3}. Then we have 
		$$B(B_p^n,T,\lambda) \geq \frac{C}{n^{1+1/p}},$$
		where $C$ is a positive number defined by
		\begin{equation*}
			C= 
			\begin{cases}
				\max (\rho, \sigma_1), & \text{if } ~\|T\| \geq 1,\\
				\max (\rho, \sigma_2), & \text{if } ~0<\|T\|<1,
			\end{cases}
		\end{equation*}
		$$\rho=\dfrac{\lambda-\|T\|}{2\lambda-\|T\|}, ~ 
		\sigma_1 = \dfrac{\lambda-\|T\|}{(\lambda-\|T\|+1)\|T\|}, ~ \text{and } \sigma_2 = \dfrac{\lambda-\|T\|}{(\lambda-\|T\|+1)}. $$  
	\end{theorem}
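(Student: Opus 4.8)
The plan is to turn the defining inequality \eqref{E3.2.1} into a single scalar inequality in the rescaled radius $R:=n^{1+1/p}r$ and then solve for the largest admissible $R$. Since both sides of \eqref{E3.2.1} scale linearly in $f$, I may normalise $\|f\|_\infty=\sup_{z\in B_p^n}\|f(z)\|_X=1$, so that the assertion becomes $\sum_{\alpha\in\mathbb N_0^n}\sup_{z\in rB_p^n}\|T(a_\alpha)z^\alpha\|_Y\le\lambda$. I would first isolate the constant term and bound it by the operator norm, $\|T(a_0)\|_Y\le\|T\|\,\|a_0\|_X\le\|T\|$, since $\|a_0\|_X=\|f(\bar 0)\|_X\le\|f\|_\infty=1$. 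The whole problem then reduces to estimating the tail $\sum_{|\alpha|\ge1}\sup_{z\in rB_p^n}\|T(a_\alpha)z^\alpha\|_Y$.

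For the tail I would group the monomials by total degree $m$ and work with the homogeneous expansion $f=\sum_m P_m$. Two facts drive the estimate. First, the Cauchy-type inequality on a complete Reinhardt domain gives $\|a_\alpha\|_X\,\sup_{z\in B_p^n}|z^\alpha|\le\|f\|_\infty$, while the homogeneity identity $\sup_{z\in rB_p^n}|z^\alpha|=r^{|\alpha|}\sup_{z\in B_p^n}|z^\alpha|$ lets the radius factor out degree by degree; this is where the geometry of the $p$-ball enters, through the explicit maximiser of $|z^\alpha|$. Second, averaging over the torus (legitimate because $B_p^n$ is complete Reinhardt) gives $\|P_m\|_\infty\le\|f\|_\infty$. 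Feeding these into the degree-$m$ sum bounds it by a constant multiple of $(n^{1+1/p}r)^m$, and summing over $m$ produces a geometric series in $R=n^{1+1/p}r$. Establishing this per-degree estimate, with the correct factor $n^{1+1/p}$ and the right dependence on $\|T\|$, is the step I expect to be the main obstacle: the $n^{1/p}$ reflects the comparison $B_\infty^n\subseteq n^{1/p}B_p^n$ and the remaining $n$ the polydisk count of degree-$m$ monomials, while the precise interplay between $\|T\|$ and the degree is exactly what separates the sharp from the crude estimate used below.

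With the tail under control, the inequality to be solved is $\|T\|+(\text{tail})\le\lambda$, and the three candidate constants arise from two admissible ways of carrying the operator norm. Estimating the coefficients crudely by $\|T(a_\alpha)\|_Y\le\lambda\|a_\alpha\|_X$, so that $\lambda$ appears as a single prefactor, bounds the degree-$m$ sum by $\lambda\,(n^{1+1/p}r)^m$ and the tail by $\lambda\frac{R}{1-R}$; the condition $\|T\|+\lambda\frac{R}{1-R}\le\lambda$, i.e.\ $\frac{R}{1-R}\le\frac{\lambda-\|T\|}{\lambda}$, is extremal at $R=\rho=\frac{\lambda-\|T\|}{2\lambda-\|T\|}$. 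Estimating them sharply through $\|T\|$ and letting the operator norm enter the common ratio as $\max(1,\|T\|)$ bounds the degree-$m$ sum by $(\max(1,\|T\|)\,n^{1+1/p}r)^m$ and the tail by $\frac{\max(1,\|T\|)R}{1-\max(1,\|T\|)R}$; the resulting condition $\frac{\max(1,\|T\|)R}{1-\max(1,\|T\|)R}\le\lambda-\|T\|$ gives $\max(1,\|T\|)R\le\frac{\lambda-\|T\|}{\lambda-\|T\|+1}$, that is $R=\sigma_1$ when $\|T\|\ge1$ and $R=\sigma_2$ when $0<\|T\|<1$, the latter being where the factor $\max(1,\|T\|)=1$ drops out of the ratio. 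Both routes deliver admissible radii, so the best one is $R=\max(\rho,\sigma_1)$ or $\max(\rho,\sigma_2)$ according to the size of $\|T\|$, and undoing $R=n^{1+1/p}r$ yields $r\ge C/n^{1+1/p}$. Finally, the hypothesis $\lambda>\|T\|$ makes $\lambda-\|T\|>0$, hence each of $\rho,\sigma_1,\sigma_2$ strictly positive and each threshold automatically below the convergence bound (one checks $\rho<1$ and $\max(1,\|T\|)\sigma_i<1$), so the geometric series above converge at the claimed radius and the lower bound is genuinely positive.
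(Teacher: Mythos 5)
Your proposal is correct in outline, but it takes a genuinely different route from the paper. The paper never touches coefficients: it transfers the whole problem to the polydisk via Lemma \ref{L3.2.2}(1) together with $S(\mathbb{D}^n,B_p^n)\leq n^{1/p}$ and $S(B_p^n,\mathbb{D}^n)\leq 1$, identifies $B(\mathbb{D}^n,T,\lambda)=K(\mathbb{D}^n,T,\lambda)$ by Proposition \ref{3P1}, and then simply quotes the lower bound $K(\mathbb{D}^n,T,\lambda)\geq C/n$ from Proposition 3.3 of \cite{DE12}, where the constants $\rho,\sigma_1,\sigma_2$ come ready-made; the factor $n^{1/p}$ is lost in the domain transfer. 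You instead work directly on $B_p^n$, in effect re-proving the cited polydisk estimate in situ, and the step you flagged as the main obstacle does go through: Cauchy estimates on the inscribed polydisk $n^{-1/p}\,\mathbb{D}^n\subseteq B_p^n$ give $\|a_\alpha\|\leq n^{m/p}\|f\|_\infty$, the inclusion $B_p^n\subseteq \mathbb{D}^n$ gives $\sup_{z\in rB_p^n}|z^\alpha|\leq r^m$, and the monomial count $\#\{|\alpha|=m\}=\binom{n+m-1}{m}\leq n^m$ yields your per-degree bound $\lambda\,(n^{1+1/p}r)^m$, respectively $\bigl(\max(1,\|T\|)\,n^{1+1/p}r\bigr)^m$ on the second route; solving $\|T\|+\text{tail}\leq\lambda$ for the geometric series reproduces exactly $\rho$, $\sigma_1$, $\sigma_2$ (your algebra checks out: $\frac{R}{1-R}\leq\frac{\lambda-\|T\|}{\lambda}$ gives $R\leq\rho$, and $\frac{\mu R}{1-\mu R}\leq\lambda-\|T\|$ with $\mu=\max(1,\|T\|)$ gives $\mu R\leq\frac{\lambda-\|T\|}{\lambda-\|T\|+1}$). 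The agreement of constants is no accident: your two routes are precisely the two estimates inside the proof of \cite[Proposition 3.3]{DE12}, so what the paper buys by citation you re-derive self-containedly. One further remark worth making: the sharper Reinhardt--Cauchy estimate that you yourself state, $\|a_\alpha\|\sup_{z\in B_p^n}|z^\alpha|\leq\|f\|_\infty$ (valid vector-valued via Hahn--Banach, exactly as the paper uses for \eqref{WEINER}), makes each degree-$m$ term at most $\lambda r^m$ with no $n^{1/p}$ loss at all, so your direct method, pushed to its natural conclusion, yields $B(B_p^n,T,\lambda)\geq C/n$ --- strictly stronger than the stated theorem for $p<\infty$, and consistent with the known scalar orders such as $B_n(B_1^n)\geq 1-\sqrt[n]{2/3}\sim 1/n$ --- whereas the paper's transfer argument cannot avoid the $n^{1/p}$ factor.
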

	
	\medskip
	
	We conclude this section by giving the following result, by considering a particular case of Theorem \ref{3T1}.  Moreover, if $\lambda=1$, then we only have an obvious lower bound that $B(\Omega,T,\lambda) \geq 0.$
	\begin{corollary}\label{C31}
		Let $f:B_p^n \to X$ be an analytic function with the series expansion \eqref{E3.1.3}. Then for $1\leq p \leq \infty$ and for any $\lambda>1$, the $\lambda$-second Bohr radius satisfies
		$$B(B_p^n,X, \lambda) \geq \frac{\lambda-1}{\lambda \cdot n^{1+1/p}}.$$
	\end{corollary}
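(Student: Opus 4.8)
The plan is to obtain Corollary \ref{C31} as the direct specialization of Theorem \ref{3T1} to the identity operator, with no new machinery required. First I would set $T = \mathrm{Id}_X$, so that by the convention fixed in Definition \ref{D1} we have $B(B_p^n, T, \lambda) = B(B_p^n, X, \lambda)$. For any nonzero Banach space $X$ the identity operator satisfies $\|T\| = \sup_{\|x\|_X \leq 1}\|x\|_X = 1$, so in particular $\|T\| \geq 1$ and we land in the first branch of the definition of $C$ from Theorem \ref{3T1}, namely $C = \max(\rho, \sigma_1)$.

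Next I would substitute $\|T\| = 1$ into the two quantities appearing there. This gives
$$\rho = \frac{\lambda - 1}{2\lambda - 1} \qquad \text{and} \qquad \sigma_1 = \frac{\lambda - 1}{(\lambda - 1 + 1)\cdot 1} = \frac{\lambda - 1}{\lambda}.$$
Since $\lambda > 1$, the numerator $\lambda - 1$ is strictly positive, while the denominators satisfy $2\lambda - 1 > \lambda > 0$; hence $\sigma_1 > \rho > 0$, and therefore $C = \max(\rho, \sigma_1) = \sigma_1 = \dfrac{\lambda - 1}{\lambda}$.

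Feeding this value of $C$ into the conclusion of Theorem \ref{3T1} then yields
$$B(B_p^n, X, \lambda) \geq \frac{C}{n^{1+1/p}} = \frac{\lambda - 1}{\lambda \, n^{1+1/p}},$$
which is precisely the asserted bound. The only nonroutine point in the entire argument is the comparison between $\rho$ and $\sigma_1$, and even that reduces to the elementary inequality $2\lambda - 1 > \lambda$ valid for all $\lambda > 1$, so I anticipate no genuine obstacle. I would close by remarking that the hypothesis $\lambda > 1$ is exactly what makes the estimate nontrivial: at $\lambda = 1$ both $\rho$ and $\sigma_1$ vanish, and one recovers only the trivial bound $B(B_p^n, X, \lambda) \geq 0$ already noted before the statement.
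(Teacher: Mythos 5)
Your proposal is correct and coincides with the paper's own proof: both specialize Theorem \ref{3T1} to $T=\mathrm{Id}_X$ with $\|T\|=1$ and compute $C=\max\left(\frac{\lambda-1}{2\lambda-1},\frac{\lambda-1}{\lambda}\right)=\frac{\lambda-1}{\lambda}$. Your extra verification that $\sigma_1>\rho$ via $2\lambda-1>\lambda$ is the same comparison the paper leaves implicit.
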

	\begin{proof}
		Since $T$ is the identity operator, $\|T\|=1$. So from Theorem \ref{3T1}, we have 
		$$C= \max \left(\dfrac{\lambda-1}{2\lambda-1},\dfrac{\lambda-1}{\lambda}\right) = \dfrac{\lambda-1}{\lambda},$$
		which gives the required lower bound.
	\end{proof}
	
	Next, we provide a logarithmic lower bound in a different setting. Prior to that, we recall some functional analytic concepts.  A partially ordered set $(A,\leq)$ is said to be a {\em lattice} if any $\{x,y\} \subseteq A$ has a least upper bound (denoted by $x \vee y$) and a greatest lower bound (denoted by $x \wedge y$). We give the notation $|x|=x \vee (-x)$, for any $x \in A$. A vector space with a lattice structure is called a {\em vector lattice}. A Banach space $X$ with a lattice structure that satisfies the condition $|x| \leq |y|$ implies $\|x\| \leq \|y\|$, for any $x, y \in X$ is called a {\em Banach lattice}. For $q \in [1,\infty)$, we define the {\em $q$-concave Banach lattice} as a Banach lattice $X$ satisfying the condition that there exists a constant $C>0$ such that for any $x_1,\dots,x_n \in X$, we have
	\begin{equation*}
		\Bigg(\sum_{k=1}^n \|x_k\|^q\Bigg)^{1/q} \leq C \Bigg\|\Bigg(\sum_{k=1}^n |x_k|^q\Bigg)^{1/q}\Bigg\|,
	\end{equation*}
	where the right-hand side is defined as 
	\begin{equation*}
		\Bigg(\sum_{k=1}^n |x_k|^q\Bigg)^{1/q}:={\Large \wedge}  \Bigg\{\sum_{k=1}^n a_kx_k: a_k \in \mathbb{C}, \sum_{k=1}^n |a_k|^q<1 \Bigg\}.
	\end{equation*}
	Note that the sequence space $l_q$ is $q'$-concave, where $q':=\max (q,2)$. For $p,q \in [1,\infty)$, an operator $T:X\to Y$ is said to be $(p,q)$-summing if there exists a constant $C>0$ such that for any $x_1,\dots,x_n \in X$, we have
	\begin{equation*}
		\Bigg(\sum_{k=1}^n \|T(x_k)\|^p\Bigg)^{1/p} \leq \sup_{\phi \in X^*, \|\phi\|<1} \Bigg(\sum_{k=1}^n |\phi(x_k)|^q\Bigg)^{1/q}.
	\end{equation*}
	See \cite{DE12, LI79} for a detailed study of these
	topics.
	
	\medskip
	Now we are in a stage of stating our result on a logarithmic lower bound for vector-valued analytic functions. We use a similar idea as in \cite{DA23} to obtain our result.
	\begin{theorem}\label{LOGTHM}
		Let $Y$ be a $q$-concave Banach lattice, for $q \in [2, \infty)$ and $T:X \to Y$ be an $(r,1)$-summing operator such that $\|T\| \leq \lambda$, where $r \in [1,q)$.  If $f:B_p^n \to X$ be an analytic function with the series expansion \eqref{E3.1.3}, and $\|f\|<1$, then there exists a constant $C_0>0$ such that 
		\begin{equation*}
			B(B_p^n,T,\lambda) \geq \dfrac{C_0}{1+2\lambda} \Bigg(\dfrac{\|T\|-\lambda}{\|T\|-2\lambda}\Bigg) \dfrac{(\log n)^{1-\frac{1}{q}}}{n^{\frac{pq-p+q}{pq}}}.
		\end{equation*}
	\end{theorem}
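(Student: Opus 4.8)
The plan is to bound the left-hand side of the defining inequality \eqref{E3.2.1} for $\Omega = B_p^n$ directly and to determine the largest radius (call it $R$, to avoid clashing with the summing exponent $r$) for which it stays below $\lambda\|f\|_\infty$. First I would record the elementary extremal value of a monomial on a ball: a weighted arithmetic--geometric mean argument (or a Lagrange multiplier computation) gives, for $|\alpha| = m$,
\[
\sup_{z \in R\cdot B_p^n}|z^\alpha| = R^{m}\left(\frac{\alpha^\alpha}{m^m}\right)^{1/p},
\]
with the convention that the right factor is $1$ when $p=\infty$. Reorganizing \eqref{E3.1.3} by homogeneous degree, the series in \eqref{E3.2.1} becomes $\sum_{m\ge 0} R^m \sum_{|\alpha|=m}(\alpha^\alpha/m^m)^{1/p}\,\|T(a_\alpha)\|_Y$. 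The degree-zero term is controlled by $\|T(a_0)\|_Y \le \|T\|\,\|f(0)\|_X \le \|T\|\,\|f\|_\infty$, and each homogeneous part obeys the Cauchy estimate $\|P_m\|_\infty \le \|f\|_\infty < 1$, which holds because $B_p^n$ is a balanced complete Reinhardt domain.

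The heart of the argument is a vector-valued Bohnenblust--Hille type inequality for each fixed degree $m\ge 1$. Exploiting that $T$ is $(r,1)$-summing and that $Y$ is $q$-concave, and following the scheme of \cite{DA23, DE12}, I would derive a mixed-norm estimate
\[
\left(\sum_{|\alpha|=m}\|T(a_\alpha)\|_Y^{\mu}\right)^{1/\mu} \le D_m\,\|P_m\|_\infty,
\]
for an exponent $\mu = \mu(q,r,m)$ determined by the interplay of the concavity index $q$ and the summing index $r$, where the constants $D_m$ grow at most geometrically (hypercontractively) in $m$. Applying H\"older's inequality with conjugate exponent $\mu'$ against the weights $(\alpha^\alpha/m^m)^{1/p}$ then splits the degree-$m$ contribution into the purely combinatorial factor $\big(\sum_{|\alpha|=m}(\alpha^\alpha/m^m)^{\mu'/p}\big)^{1/\mu'}$ and the polynomial norm $\|P_m\|_\infty \le \|f\|_\infty$.

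Next I would evaluate the combinatorial factor by expressing the sum over $\{|\alpha|=m\}$ through multinomial coefficients and applying Stirling's formula. Writing $E_m$ for the product of this factor with $D_m$, so that the degree-$m$ term of the series is at most $E_m R^m\|f\|_\infty$, the tail $\sum_{m\ge 1}E_m R^m$ converges thanks to the hypercontractive control on $D_m$, and its size is governed by the degrees $m$ of order $\log n$; evaluating there is exactly what produces the power $n^{-(1/p + 1 - 1/q)} = n^{-(pq-p+q)/pq}$ together with the logarithmic factor $(\log n)^{1 - 1/q}$. Imposing the admissibility condition $\|T\| + \sum_{m\ge 1}E_m R^m \le \lambda$ and summing the resulting geometric-type series determines the permissible $R$; tracking the degree-zero contribution $\|T\|$ and the tail yields the scalar factors $\tfrac{1}{1+2\lambda}$ and $\rho = \tfrac{\lambda-\|T\|}{2\lambda-\|T\|} = \tfrac{\|T\|-\lambda}{\|T\|-2\lambda}$, and collecting the remaining absolute constants into $C_0$ gives the stated bound.

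I expect the main obstacle to be the second step: establishing the vector-valued Bohnenblust--Hille inequality with the correct exponent $\mu$ and, above all, with constants $D_m$ that do not grow faster than geometrically in $m$. This hypercontractive control is what makes the series over $m$ summable and is precisely what dictates both the exponent of $n$ and the power of $\log n$ in the final estimate; the $q$-concavity of $Y$ and the $(r,1)$-summing property of $T$ must be combined so that the exponents amalgamate into $1/p + 1 - 1/q$. By comparison, the extremal monomial computation, the H\"older split, and the final geometric summation should be routine once the mixed-norm inequality is in hand.
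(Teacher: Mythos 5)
There is a genuine gap, and it is exactly where you flagged it: the mixed-norm, Bohnenblust--Hille-type inequality $\bigl(\sum_{|\alpha|=m}\|T(a_\alpha)\|_Y^{\mu}\bigr)^{1/\mu}\le D_m\|P_m\|_\infty$ with an exponent $\mu(q,r,m)$ and geometrically bounded constants $D_m$ is postulated, not proved. ``Following the scheme of \cite{DA23, DE12}'' is not a derivation: \cite{DA23} is a scalar paper that deliberately \emph{avoids} this route, and the hypercontractive control of $D_m$ under the joint hypotheses ($Y$ $q$-concave, $T$ $(r,1)$-summing, $r<q$) is the entire analytic content of the result --- it is what dictates both $(\log n)^{1-1/q}$ and the exponent of $n$. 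Without it, nothing in your outline produces the claimed powers, and the subsequent steps (the Stirling evaluation of the weighted sum $\bigl(\sum_{|\alpha|=m}(\alpha^\alpha/m^m)^{\mu'/p}\bigr)^{1/\mu'}$, and the optimization at $m\sim\log n$) are also only asserted and are delicate on $B_p^n$. A further concrete flaw: the factor $\frac{\|T\|-\lambda}{\|T\|-2\lambda}$ cannot come out of your degree-zero/tail bookkeeping as you claim; only $\frac{1}{1+2\lambda}$ arises that way, while the other factor is inherited from the lower bound for the \emph{first} Bohr radius.

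The paper's actual proof shows the heavy machinery need not be rebuilt for the second radius at all: it is a transfer argument in the spirit of \cite{DA23}. Scalarizing via $g(w)=\psi(f(zw))$, applying Wiener's coefficient inequality and Hahn--Banach gives $\sup_{z\in B_p^n}\|\sum_{|\alpha|=k}a_\alpha z^\alpha\|\le 1-\|a_0\|^2$; the unconditional-basis-constant estimate $\chi_M(\mathcal{P}(^m l_p^n))\le \lambda/K(B_p^n,T,\lambda)^m$ then yields $\sum_{|\alpha|=m}\|a_\alpha\|\le \lambda(1-\|a_0\|^2)\,n^{m/p}/K(B_p^n,T,\lambda)^m$, and your extremal monomial formula (which matches the paper's \eqref{E3.4.20}) together with a geometric series gives the clean reduction $B(B_p^n,T,\lambda)\ \ge\ K(B_p^n,T,\lambda)/\bigl((1+2\lambda)\,n^{1/p}\bigr)$. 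The logarithmic bound $K(B_p^n,T,\lambda)\gtrsim \frac{\|T\|-\lambda}{\|T\|-2\lambda}\,(\log n)^{1-1/q}/n^{1-1/q}$ is then imported wholesale from \cite[Theorem 3.6]{SH23}, where the $q$-concavity and $(r,1)$-summing hypotheses actually do their work; multiplying by $n^{-1/p}$ produces the exponent $(pq-p+q)/(pq)=1/p+1-1/q$. In effect your proposal amounts to re-proving the cited theorem inline; to make it a proof you would have to carry out that vector-valued hypercontractive argument in full, or else reduce to the first Bohr radius as the paper does.
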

	
	Next, we discuss certain upper bounds for the $\lambda$-second Bohr radius. The following result gives immediate lower and upper bounds in the case of finite-dimensional Banach space-valued functions, which is a direct consequence of Lemma \ref{L3.2.2}. 
	\begin{theorem}\label{THM2.12}
		Let $X$ be a finite-dimensional Banach space. For $1 \leq p \leq \infty$, let $f: B_p^n \to X$ be an analytic function with series expansion \eqref{E3.1.3}. Then there exists a universal constant $E,F_X>0$ such that 
		$$\dfrac{F_X (\lambda-1) n^{1/p-1/2} \sqrt{\log n}}{2 \lambda-1}\leq B(B_p^n,X,\lambda) \leq E \lambda^2 (n)^{1/p -1}\sqrt{\log n} ~;$$
		here $E$ is a universal constant and $F_X$ is a constant depending on the space $X$. 
	\end{theorem}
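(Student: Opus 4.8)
The plan is to obtain Theorem \ref{THM2.12} by specializing the general finite-dimensional estimate of Lemma \ref{L3.2.2} to the particular domain $\Omega=B_p^n$, so that the powers of $n$ and the factor $\sqrt{\log n}$ are simply read off from the geometry of the $\ell_p$-ball. First I would record the reduction built into Definition \ref{D1}: since scaling is homogeneous,
$$\sup_{z\in r\cdot B_p^n}\|a_\alpha z^\alpha\|_X = r^{|\alpha|}\,\|a_\alpha\|_X\,\sup_{z\in B_p^n}|z^\alpha|,$$
and for the $\ell_p$-ball one has the closed form $\sup_{z\in B_p^n}|z^\alpha|=\big(\alpha^\alpha/|\alpha|^{|\alpha|}\big)^{1/p}$ (with the convention $0^0=1$), where $\alpha^\alpha:=\prod_{k}\alpha_k^{\alpha_k}$. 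Thus $B(B_p^n,X,\lambda)$ is controlled, degree by degree, by sums of the shape $\sum_{\alpha}\|a_\alpha\|_X\,r^{|\alpha|}\big(\alpha^\alpha/|\alpha|^{|\alpha|}\big)^{1/p}$, and the whole argument amounts to estimating such sums from above (for the lower bound on the radius) and exhibiting an extremal function (for the upper bound).

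For the lower bound I would normalise $\|f\|_\infty\le 1$ and split off the constant term. Since $\|a_0\|_X=\|f(0)\|_X\le 1$, it suffices to force the tail $\sum_{|\alpha|\ge 1}\sup_{z\in r\cdot B_p^n}\|a_\alpha z^\alpha\|_X\le \lambda-1$. Because $X$ is finite-dimensional, all norms on $X$ are equivalent, and this is exactly where the $X$-dependent constant $F_X$ enters: it lets me pass from the vector coefficients $a_\alpha\in X$ to scalar coefficient control at a fixed multiplicative cost, after which the logarithmically improved summation estimate on $B_p^n$ recorded in Lemma \ref{L3.2.2} (the same circle of ideas used in \cite{DA23}) produces the rate $n^{1/p-1/2}\sqrt{\log n}$. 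Choosing $r$ so as to balance the constant term against the tail yields the prefactor $\tfrac{\lambda-1}{2\lambda-1}$, which is precisely the balancing $\rho$ already encountered in Theorem \ref{3T1} and Corollary \ref{C31}; combining these gives the stated lower bound.

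For the upper bound I would exhibit a near-extremal $X$-valued polynomial and show that \eqref{E3.2.1} must fail once $r$ exceeds $E\lambda^2 n^{1/p-1}\sqrt{\log n}$. The natural candidate is a fixed unit vector of $X$ multiplied by a scalar homogeneous polynomial of degree $m\ge 2$ with Kahane--Salem--Zygmund-type random coefficients, whose supremum norm over $B_p^n$ is small while its coefficient sum is large; this is the same mechanism that forces $B(\mathbb{D}^n,\overline{B_p^m},1)=0$ in Theorem \ref{3Tm1}, here quantified for $\lambda>1$, with the factor $\lambda^2$ appearing as the slack of the two-sided comparison inside Lemma \ref{L3.2.2}. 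Substituting the explicit value of $\sup_{z\in B_p^n}|z^\alpha|$ into this example then yields the claimed upper rate.

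The genuine difficulty lies entirely inside Lemma \ref{L3.2.2}, not in the specialization: the main obstacle will be to obtain the sharp $\sqrt{\log n}$ factor in the vector-valued summation estimate while keeping the finite-dimensional constant $F_X$ under uniform control, and to verify that the extremal polynomial above is genuinely $X$-valued so that the obstruction persists for every admissible $\lambda$. Once Lemma \ref{L3.2.2} is established, Theorem \ref{THM2.12} follows immediately by inserting the $\ell_p$-ball data, and the residual window between the two bounds is of the same nature as the classical gap in the multidimensional Bohr-radius estimates of \cite{BO00, DE04}.
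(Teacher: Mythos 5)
Your proposal has a genuine gap: it locates the entire analytic content of the theorem inside Lemma \ref{L3.2.2}, but that lemma contains no summation estimate and no logarithmic factor whatsoever --- it is a purely geometric transference statement comparing $B(\Omega_1,T,\lambda)$ and $B(\Omega_2,T,\lambda)$ through the product $S(\Omega_1,\Omega_2)\,S(\Omega_2,\Omega_1)$. Consequently your phrase ``the logarithmically improved summation estimate on $B_p^n$ recorded in Lemma \ref{L3.2.2}'' refers to something that does not exist, and your closing claim that once Lemma \ref{L3.2.2} is established the theorem ``follows immediately'' leaves the $\sqrt{\log n}$, the prefactor $(\lambda-1)/(2\lambda-1)$, the factor $\lambda^2$, and the constant $F_X$ all unaccounted for. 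What the paper actually does is shorter and different: using the values $S(\mathbb{D}^n,B_p^n)=n^{1/p}$ and $S(B_p^n,\mathbb{D}^n)=1$ in part (1) of Lemma \ref{L3.2.2} it transfers between $B_p^n$ and the polydisk, and then invokes Proposition \ref{3P1} --- the identification $B(\mathbb{D}^n,X,\lambda)=K(\mathbb{D}^n,X,\lambda)$ of the second with the \emph{first} Bohr radius on $\mathbb{D}^n$, a step entirely absent from your plan --- to get $K(\mathbb{D}^n,X,\lambda)/n^{1/p}\le B(B_p^n,X,\lambda)\le n^{1/p}K(\mathbb{D}^n,X,\lambda)$. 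Both the lower and the upper bound are then imported wholesale from the two-sided finite-dimensional estimate for $K(\mathbb{D}^n,X,\lambda)$ in Theorem 4.1 of \cite{DE12}; in particular the $\lambda^2$ is not ``the slack of the two-sided comparison inside Lemma \ref{L3.2.2}'' (the comparison constants there are $\lambda$-free) but the constant in that cited upper bound.

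Your direct strategies are in the right circle of ideas --- normalizing, splitting off $a_0$, and balancing to produce $(\lambda-1)/(2\lambda-1)$ mirrors Theorem \ref{3T1}, and a Kahane--Salem--Zygmund random polynomial is indeed how the paper proves its other upper bound, Theorem \ref{T3.4.3} --- so with the input from \cite{DE12} made explicit, or replaced by a self-contained KSZ argument supplying $F_X$ via the finite-dimensionality of $X$, your outline could be completed. But watch the exponents: the transference route (and any route compatible with your own upper bound) yields the lower rate $\sqrt{\log n}/n^{1/p+1/2}$, which is what the paper's proof in fact derives; the rate $n^{1/p-1/2}\sqrt{\log n}$ displayed in the statement would eventually exceed the upper bound $n^{1/p-1}\sqrt{\log n}$ and is evidently a sign slip in the theorem. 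Your sketch reproduces that rate verbatim without noticing that the mechanism you describe cannot produce it.
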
   
	
	\medskip
	
	Recall that a {\em Schauder basis} for a vector space $X$ is a sequence $(e_k)$ of elements of $X$ such that for every $x \in X$, there exist unique scalars $c_k \in \mathbb{C}$ such that $x = \sum_{k=1}^\infty c_k e_k$. The sequence space $l^p, ~ 1 \leq p <\infty$ has a Schauder basis, but $l^\infty$ does not have a Schauder basis. In addition, it is trivial that a sequence space can identify every vector space with a Schauder basis with every element $x=\sum_{k=1}^\infty c_k e_k\in X$ can be considered as the unique sequence $(c_k)$. We now present our result, which provides an upper bound for $B(B_p^n, T, \lambda)$ in connection with the Schauder basis. 
	\begin{theorem}\label{T3.4.3}
		Let $X$ be a Banach space having a Schauder basis. For $1 \leq p < \infty$ and $n>1$, let $f: B_p^n \to X$ be an analytic function with the series expansion \eqref{E3.1.3}. Then the $\lambda$-second Bohr radius satisfies
		$$B(B_p^n,X,\lambda) \leq  \begin{cases}
			\lambda e^3 2^{3/2}\dfrac{(\log n)^{\frac{1}{2} + \frac{1}{p}}}{n}   , ~ \text{if}~ 1 \leq p \leq 2,\\
			\lambda e^3 2^{3/2}\left( \dfrac{\log n}{n}\right)^{\frac{1}{2} + \frac{1}{p}}   , ~ \text{if} ~ 2 \leq p < \infty.
		\end{cases}$$
	\end{theorem}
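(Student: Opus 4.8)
The plan is to prove the upper bound by exhibiting, for each radius $r$ exceeding the asserted quantity, a single $X$-valued polynomial that violates the defining inequality \eqref{E3.2.1}; since $B(B_p^n,X,\lambda)$ is by definition the largest admissible radius, producing such a ``bad'' function forces the bound. The first step is a reduction from the vector-valued to the scalar-valued setting, and this is where the Schauder basis $(e_k)$ enters. For a scalar polynomial $g=\sum_\alpha c_\alpha z^\alpha$ on $B_p^n$, set $f=g\,e_1$, so that $a_\alpha=c_\alpha e_1$. Then $\sup_{z\in rB_p^n}\|a_\alpha z^\alpha\|_X=\|e_1\|_X\sup_{z\in rB_p^n}|c_\alpha z^\alpha|$ and $\sup_{z\in B_p^n}\|f(z)\|_X=\|e_1\|_X\sup_{z\in B_p^n}|g(z)|$, so the factor $\|e_1\|_X$ cancels on both sides of \eqref{E3.2.1} and the vector inequality for $f$ reduces to the scalar $\lambda$-second Bohr inequality for $g$. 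Consequently $B(B_p^n,X,\lambda)\le B(B_p^n,\mathbb{C},\lambda)$, and it suffices to bound the scalar quantity.

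Next I would extract the factor $\lambda$ using homogeneity. For an $m$-homogeneous test polynomial $g$ the left-hand side of \eqref{E3.2.1} equals $r^m\sum_{|\alpha|=m}|c_\alpha|\sup_{B_p^n}|z^\alpha|$, hence scales as $r^m$, so the critical radius of $g$ at level $\lambda$ is $\lambda^{1/m}$ times its critical radius at level $1$; since $\lambda^{1/m}\le\lambda$ for $\lambda\ge1$ and $m\ge1$, testing against any fixed $m$-homogeneous $g$ gives $B(B_p^n,\mathbb{C},\lambda)\le\lambda\,r_*(g)$, where $r_*(g)$ denotes the classical ($\lambda=1$) critical radius. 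It remains to construct an $m$-homogeneous $g$ on $B_p^n$ whose $r_*(g)$ matches the asserted profile.

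The heart of the argument is this construction, which follows the upper-bound mechanism behind Theorem B of Defant et al. (the source of the constant $e^3 2^{3/2}$), now carried out directly on $B_p^n$. Via a Kahane--Salem--Zygmund choice of unimodular coefficients one builds $g(z)=\sum_{|\alpha|=m}\varepsilon_\alpha z^\alpha$ and controls the two relevant quantities: the coefficient sum $\sum_{|\alpha|=m}\sup_{B_p^n}|z^\alpha|=\sum_{|\alpha|=m}\prod_{k=1}^n(\alpha_k/m)^{\alpha_k/p}$, evaluated through the $\ell_p$-geometry of the monomials, and the sup norm $\sup_{B_p^n}|g|$, controlled by a probabilistic estimate. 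Taking the $m$-th root of the ratio and optimizing over the degree with the classical choice $m\asymp\log n$ produces the power $\tfrac12+\tfrac1p$ of $\log n$. The dichotomy between $1\le p\le2$ and $2\le p<\infty$ reflects the two regimes of $S(B_p^n,B_2^n)$ from \eqref{D3.1.3}, namely $1$ and $n^{1/2-1/p}$, which govern how the $\ell_p$-ball compares with the $\ell_2$-ball in the sup-norm step and thus fix the algebraic power of $n$.

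I expect the main obstacle to be the sup-norm estimate for the extremal polynomial with the sharp constant $e^3 2^{3/2}$: one must combine the Kahane--Salem--Zygmund bound with the exact $\ell_p$ monomial maxima $\prod_k(\alpha_k/m)^{\alpha_k/p}$ and carry out the degree optimization so that no spurious factors inflate the constant, while keeping the $\log$-exponent exactly $\tfrac12+\tfrac1p$. Verifying that the homogeneous reduction does not lose the $\lambda$-dependence and treating the range of small $n$ (where $\log n<1$) are secondary technical points.
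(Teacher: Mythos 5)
Your overall frame is the same as the paper's: reduce to a scalar polynomial via $f=g\,e_1$ (the factor $\|e_1\|_X$ cancels on both sides of \eqref{E3.2.1}), extract $\lambda$ through $\lambda^{1/m}\le\lambda$ on an $m$-homogeneous test polynomial, invoke the Kahane--Salem--Zygmund-type sign estimate of Defant--Garc\'ia--Maestre, optimize $m\asymp\log n$, and split the $p$-regimes via $\sup_{\|z\|_p<1}\|z\|_2\le 1$ (for $p\le 2$) versus $n^{1/2-1/p}$ (for $p\ge 2$). All of that matches the paper's proof. The genuine gap is your choice of extremal polynomial: you take unimodular coefficients, $g(z)=\sum_{|\alpha|=m}\varepsilon_\alpha z^\alpha$, and with that choice the argument does \emph{not} deliver the exponent $\tfrac12+\tfrac1p$ on $\log n$. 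Indeed, your coefficient side is $\sum_{|\alpha|=m}S_{p,\alpha}$ with $S_{p,\alpha}=(\alpha^\alpha/m^m)^{1/p}$; since the bulk of the indices have entries in $\{0,1\}$ (where $S_{p,\alpha}=m^{-m/p}$ exactly), this sum is of order $\binom{n+m-1}{m}m^{-m/p}$, whose $m$-th root is $\asymp (n/m)\,m^{-1/p}$ --- a loss of a full factor $m$ compared with $n\,m^{-1/p}$. On the other side, the KSZ lemma you are invoking (Lemma 4.1 / Theorem 4.2 of \cite{DE03}, as used in \cite{DE04}) carries the coefficient factor $\sup_{|\alpha|=m}|c_\alpha|\sqrt{\alpha!/m!}$, which for unimodular $c_\alpha$ is $\le 1$, so you gain back only the factor $(\sqrt{m!})^{1/m}\approx\sqrt{m/e}$ that the paper's choice pays. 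Net effect after setting $m=[\log n]$: you obtain $B(B_p^n,X,\lambda)\lesssim \lambda\,(\log n)^{1+1/p}\,n^{-1}\sup_{\|z\|_p<1}\|z\|_2$, i.e.\ exponent $1+\tfrac1p$, off by a factor $\sqrt{\log n}$ from the theorem, and the constant $e^3 2^{3/2}$ does not emerge either.

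The fix, which is exactly what the paper does, is to test with random signs on the multinomial expansion of $(z_1+\cdots+z_n)^m$, that is $g(z)=\sum_{|\alpha|=m}\tfrac{m!}{\alpha!}\zeta_\alpha z^\alpha$. Then the multinomial theorem gives $\sum_{|\alpha|=m}\tfrac{m!}{\alpha!}S_{p,\alpha}\ge n^m m^{-m/p}$, so the $m$-th root of the left side is $r n\,m^{-1/p}$ with the clean factor $n$, while the KSZ coefficient factor becomes $\sup_\alpha \tfrac{m!}{\alpha!}\sqrt{\alpha!/m!}=\sqrt{m!}$; it is precisely $(\sqrt{m!}\cdot n)^{1/m}\approx e\sqrt{m/e}$ that produces the surviving $\sqrt{\log n}$ inside the bound $e^3 2^{3/2}\sqrt{\log n}$ of \cite[Theorem 4.2]{DE03}, and the extra $m^{1/p}=(\log n)^{1/p}$ from the monomial maxima $S_{p,\alpha}\ge m^{-m/p}$ then yields exactly $(\log n)^{\frac12+\frac1p}$. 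So the architecture of your argument is sound and coincides with the paper's, but the specific test polynomial must be the weighted (multinomial) one; the unimodular one genuinely fails the stated bound rather than merely complicating the constant, contrary to your closing assessment that only the constant was at risk.
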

	\begin{remark}
		In Theorem \ref{T3.4.3}, if we consider $X=\mathbb{C}$ with  $\lambda=1$ and $|f(z)| \leq 1,$ then the theorem reduces to Example 3.6 of \cite{DE04}, for the case such that $p_k=p, ~ k=1, \dots,n.$
	\end{remark}
	
	In the previous results, we studied the $\lambda$-second Bohr radius associated with unit balls with $p$-norms. Note that the unit vector basis $(e_k)$ of $l_p$ spaces has the property that every permutation of $(e_k)$ is equivalent and each one forms a basis for $l_p$ space. The theory of symmetric and unconditional bases generalizes this property to arbitrary Banach spaces. A Schauder basis $(x_k)$ in a Banach space $X$ is said to be {\em unconditional} if there exists a constant $K \geq 0$ such that 
	\begin{equation*}
		\left\|\sum_{k=1}^\infty \zeta_k a_k x_k\right\|_{X} \leq K \left\|\sum_{k=1}^\infty a_k x_k\right\|_{X},
	\end{equation*}
	for all $\zeta_k, a_k \in \mathbb{C}$ such that $|\zeta_k| \leq 1.$ The best such constant $K$ is called {\em unconditional basis constant of $(x_k)$}, which is denoted by $\chi(x_k)$. We denote by $\chi_M(\mathcal{P}(^mX))$, for the unconditional basis constant of $(z^\alpha)$ for the space $\mathcal{P}(^mX)$ of $m$-homogeneous polynomials $P$ in the space $X$ occupied with the norm
	\begin{equation*}
		\|P\|_{\mathcal{P}(^mX)} := \sup \{\|P(z)\| : z \in X, \|z\|\leq 1\}.
	\end{equation*}
	
	\medskip
	
	\noindent A basis $(x_k)$  is said to be {\em symmetric} if for each permutation $\sigma$ of the integers, we have
	\begin{enumerate}[label=(\alph*)]
		\item the sequence $(x_{\sigma(k)})$ also forms a basis for $X$,
		\item the sequences $(x_k)$ and $(x_{\sigma(k)})$ are equivalent; which means a series $\sum_{k=1}^\infty a_k x_k$ converges if and only if $\sum_{k=1}^\infty a_k x_{\sigma(k)}$ converges.
	\end{enumerate}
	A Banach space with a symmetric basis is called a {\em symmetric Banach space}. It is to be noted that every symmetric Banach space has an unconditional basis. For a detailed discussion on these topics related to Banach spaces, see  \cite{LI77, LI79}.
	
	\medskip
	
	For the upcoming result, we denote the unit ball in an arbitrary Banach space $W$ as $B_{W}$. Additionally, we use the notation $W^*$ for the dual space of $W$. We are now prepared to present our final result, which connects the second Bohr radius to the duality of symmetric Banach spaces. The result is outlined as follows.
	
	\begin{theorem}\label{THM2.15}
		Let $W_n =(\mathbb{C}^n, \| \cdot \|)$ be a symmetric Banach space such that $\chi((e_k))=1$, where $(e_k)$ denotes the canonical basis for $W_n$. Define the number $b_n(\lambda)$ as  $$b_n(\lambda) :=  B(B_{W_n}, X,\lambda)  B(B_{W_n^*},X, \lambda).$$
		Then 
		\begin{equation*}
			b_n(\lambda) \leq \dfrac{(8 \lambda^2 e^6 \log n) d(W_n,l_2^n)}{n},
		\end{equation*}
		where $d(W_n,l_2^n)$ is the Banach-Mazur distance.
		Consequently, $ \lim \limits_{n \to \infty} b_n(\lambda) =0.$
	\end{theorem}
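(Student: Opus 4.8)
The plan is to bound the two factors of $b_n(\lambda)$ separately by the general Reinhardt-domain estimate in terms of the inclusion constant $S(\cdot,B_2^n)$ of \eqref{D3.1.3}, to multiply the two bounds, and then to convert the resulting product of inclusion constants into the Banach--Mazur distance by exploiting the symmetry of $W_n$. First I would record that, since $(e_k)$ is a symmetric basis with unconditional constant $\chi((e_k))=1$, the norm of $W_n$ is invariant under permutations of the coordinates and under multiplication of the coordinates by unimodular scalars; hence $B_{W_n}$ and $B_{W_n^*}$ are bounded complete Reinhardt domains, so the scheme used to prove Theorem \ref{T3.4.3} applies to each of them.

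Next I would invoke the general upper bound for the $\lambda$-second Bohr radius on an arbitrary complete Reinhardt domain $\Omega$ (obtained by the same argument as Theorem \ref{T3.4.3}, that is, the $\lambda$-vector-valued analogue of the estimate of Defant et al.), namely
\[
B(\Omega,X,\lambda)\ \le\ 2^{3/2}\,\lambda\,e^{3}\,\sqrt{\log n}\,\frac{S(\Omega,B_2^n)}{\sqrt{n}} .
\]
Applying this with $\Omega=B_{W_n}$ and with $\Omega=B_{W_n^*}$ and multiplying the two inequalities gives
\[
b_n(\lambda)\ \le\ 8\,\lambda^{2}e^{6}\,(\log n)\,\frac{S(B_{W_n},B_2^n)\,S(B_{W_n^*},B_2^n)}{n},
\]
so the problem is reduced to computing the product $S(B_{W_n},B_2^n)\,S(B_{W_n^*},B_2^n)$.

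The heart of the proof is the identity $S(B_{W_n},B_2^n)\,S(B_{W_n^*},B_2^n)=d(W_n,l_2^n)$. Unwinding \eqref{D3.1.3}, one has $S(B_{W_n},B_2^n)=\|\operatorname{id}:W_n\to l_2^n\|$ and $S(B_{W_n^*},B_2^n)=\|\operatorname{id}:W_n^*\to l_2^n\|$; passing to adjoints and using that $l_2^n$ is isometrically self-dual, the second quantity equals $\|\operatorname{id}:l_2^n\to W_n\|=S(B_2^n,B_{W_n})$. Thus the product equals $\|\operatorname{id}:W_n\to l_2^n\|\,\|\operatorname{id}:l_2^n\to W_n\|$, which is always at least $d(W_n,l_2^n)$ because the identity is one admissible isomorphism. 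The reverse inequality is where the symmetry hypothesis is used: when $(e_k)$ is symmetric and $\chi((e_k))=1$, the isometry group of $W_n$ contains all signed permutations of the coordinates, and averaging an arbitrary isomorphism $T:W_n\to l_2^n$ over this group yields an operator that commutes with every signed permutation of $\mathbb{C}^n$ and is therefore a scalar multiple of $\operatorname{id}$, without increasing the product $\|T\|\,\|T^{-1}\|$. Hence a multiple of the identity is an extremal isomorphism, so it realizes $d(W_n,l_2^n)$ and equality holds. This equivariance step---equivalently, the fact that the symmetries force the John maximal-volume ellipsoid of $B_{W_n}$ to be Euclidean, so that $\operatorname{id}$ is the extremal map---is the main obstacle; the rest is bookkeeping. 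Substituting the identity gives $b_n(\lambda)\le (8\lambda^{2}e^{6}\log n)\,d(W_n,l_2^n)/n$.

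Finally, for the limit I would invoke John's theorem, $d(W_n,l_2^n)\le\sqrt{n}$ for every $n$-dimensional normed space, which yields $b_n(\lambda)\le 8\lambda^{2}e^{6}(\log n)/\sqrt{n}\to 0$ as $n\to\infty$, completing the proof.
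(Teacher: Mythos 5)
Your overall architecture matches the paper's: bound each factor of $b_n(\lambda)$ by a $\sqrt{\log n}$-type estimate, identify the product of the $\ell_2$-inclusion constants with $d(W_n,l_2^n)$ by duality plus an enough-symmetries averaging, and finish with $d(W_n,l_2^n)\le\sqrt n$ (the paper outsources your averaging and duality steps to identities (5.5) and (5.6) of \cite{DE03}, and takes $d(W_n,l_2^n)\le\sqrt{n}$ from \cite[p.249]{TO89}). However, your first step has a genuine gap: the claimed lemma
\[
B(\Omega,X,\lambda)\ \le\ 2^{3/2}\lambda e^{3}\sqrt{\log n}\,\frac{S(\Omega,B_2^n)}{\sqrt n}
\]
is false. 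The left-hand side is invariant under dilations of $\Omega$ (Lemma \ref{L3.2.2}(2)), while the right-hand side scales linearly in $\Omega$: taking $\Omega=\rho B_\infty^n$ with $\rho\to 0$, the left side remains $B(B_\infty^n,X,\lambda)\ge(\lambda-1)/(\lambda n)>0$ for $\lambda>1$ (Corollary \ref{C31} with $p=\infty$), whereas the right side equals $2^{3/2}\lambda e^{3}\sqrt{\log n}\,\rho\to 0$. The same rescaling (replace $\|\cdot\|$ on $W_n$ by $t\|\cdot\|$) defeats the lemma even when restricted to unit balls of symmetric spaces, so you cannot legitimately apply it to $B_{W_n}$ and $B_{W_n^*}$. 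What the argument behind Theorem \ref{T3.4.3} actually produces (this is Lemma \ref{L3.5.1} of the paper) is the scale-invariant ratio form
\[
B(B_{W_n},X,\lambda)\ \le\ \lambda e^{3}2^{3/2}\sqrt{\log n}\;\frac{\sup_{\|z\|<1}\|z\|_2}{\sup_{\|z\|<1}\|z\|_1},
\]
and your proposal in effect replaces $\sup_{\|z\|<1}\|z\|_1$ by $\sqrt n$ in each factor separately, which already fails for $W_n=l_1^n$, where $\sup_{\|z\|_1<1}\|z\|_1=1$. The missing ingredient is the duality estimate (identity (5.5) of \cite{DE03}): by $1$-symmetry, $\sup_{z\in B_{W_n}}\|z\|_1=\bigl\|\sum_k e_k\bigr\|_{W_n^*}$ and $\sup_{z\in B_{W_n^*}}\|z\|_1=\bigl\|\sum_k e_k\bigr\|_{W_n}$, and pairing $\sum_k e_k$ with itself yields $n\le\bigl\|\sum_k e_k\bigr\|_{W_n}\bigl\|\sum_k e_k\bigr\|_{W_n^*}$; only the \emph{product} of the two denominators is $\ge n$, not each factor $\ge\sqrt n$. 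With this inserted, your multiplication step and the rest of your computation go through and coincide with the paper's proof.

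Two smaller points. Your duality claim $S(B_{W_n^*},B_2^n)=\|I:l_2^n\to W_n\|$ and the identity $\|I:W_n\to l_2^n\|\,\|I:l_2^n\to W_n\|=d(W_n,l_2^n)$ for $1$-symmetric spaces are correct (this is exactly (5.6) of \cite{DE03}), but averaging an isomorphism $T$ itself over the signed-permutation group can produce the zero operator; the rigorous version averages the positive form $g^{*}T^{*}Tg$, whose average is $c\,I$ by the scalar commutant, giving $\|I:W_n\to l_2^n\|\le\|T\|/\sqrt c$ and $\|I:l_2^n\to W_n\|\le\sqrt c\,\|T^{-1}\|$. Finally, your concluding use of John's theorem for $d(W_n,l_2^n)\le\sqrt n$ and the limit $8\lambda^{2}e^{6}(\log n)/\sqrt n\to 0$ agrees with the paper.
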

	
	When $\lambda=1$ and $X=\mathbb{C}$,   with $|f(z)| <1$, Theorem \ref{THM2.15} reduces to Corollary 3.11 of \cite{DE04}.
	
	\section{Preliminary Results}
	This section is dedicated to certain preliminary results that are very necessary in proving our main results. First, we deduce an estimate for the $\lambda$-second Bohr radius. Our Lemma is an extension to the vector-valued case of  Lemma 3.1 in \cite{DE04}, which deals with the Bohr radius for complex-valued functions.
	\begin{lemma}\label{L3.2.2}
		Let $\Omega_1$ and $\Omega_2$ be two bounded complete Reinhardt domains in $\mathbb{C}^n$. Then we have
		\begin{enumerate}[label=(\alph*)]
			\item If $S(\Omega_1, \Omega_2)$ is as in \eqref{D3.1.3}, then
			$$B(\Omega_2,T,\lambda)/ [S(\Omega_1, \Omega_2) \cdot S(\Omega_2, \Omega_1)]\leq B(\Omega_1,T,\lambda) \leq S(\Omega_1, \Omega_2) \cdot S(\Omega_2, \Omega_1) \cdot B(\Omega_2,T,\lambda).$$ 
			\item For any $\rho>0$, $B(\Omega_1,T,\lambda)=B(\rho \cdot \Omega_1,T,\lambda)$.
			\item If $\Omega_2 \subset \Omega_1 \subset \rho \cdot \Omega_2$, where $\rho>0$, then we have $B(\Omega_1,T,\lambda)\leq \rho \cdot B(\Omega_2,T,\lambda)$.
		\end{enumerate}
		
	\end{lemma}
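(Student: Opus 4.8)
\emph{Proof proposal.} The plan is to first record the single observation that makes every part transparent, then prove (b) and (c), and finally deduce (a) from them. The observation is the exact homogeneity of the Bohr sum: since $T(a_\alpha)z^\alpha=z^\alpha\,T(a_\alpha)$ with $z^\alpha\in\mathbb{C}$, one has $\|T(a_\alpha)z^\alpha\|_Y=|z^\alpha|\,\|T(a_\alpha)\|_Y$, while $\sup_{z\in r\Omega}|z^\alpha|=r^{|\alpha|}\sup_{z\in\Omega}|z^\alpha|$ because $z^\alpha$ is $|\alpha|$-homogeneous. Hence the defining inequality of $B(\Omega,T,\lambda)$ at radius $r$ reads
$$\sum_{\alpha}r^{|\alpha|}\,\|T(a_\alpha)\|_Y\,\sup_{z\in\Omega}|z^\alpha|\le\lambda\sup_{z\in\Omega}\|f(z)\|_X,$$
which separates the coefficient data from the geometry of $\Omega$ and is what I will manipulate throughout.

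For part (b) I would use the bijection $f\mapsto g$, $g(w):=f(\rho w)$, between bounded analytic functions on $\rho\Omega_1$ and on $\Omega_1$. Writing the coefficients of $g$ as $\rho^{|\alpha|}a_\alpha$ and using $\sup_{w\in\Omega_1}|w^\alpha|=\rho^{-|\alpha|}\sup_{z\in\rho\Omega_1}|z^\alpha|$ together with $\sup_{\Omega_1}\|g\|=\sup_{\rho\Omega_1}\|f\|$, the two Bohr sums at the same radius $r$ coincide term by term; since $f\leftrightarrow g$ is a norm-preserving bijection, this forces the two extremal radii to agree, i.e. $B(\rho\Omega_1,T,\lambda)=B(\Omega_1,T,\lambda)$.

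Part (c) is the heart of the matter and the step I expect to be the main obstacle, since it requires moving a function defined only on $\Omega_2$ onto the larger domain $\Omega_1$. Given $\Omega_2\subset\Omega_1\subset\rho\Omega_2$, set $r:=B(\Omega_1,T,\lambda)$, take a bounded analytic $f$ on $\Omega_2$ with $M:=\sup_{\Omega_2}\|f\|$, and define $\tilde f(z):=f(z/\rho)$. Because $\Omega_1\subset\rho\Omega_2$ gives $z/\rho\in\Omega_2$ for $z\in\Omega_1$, the function $\tilde f$ is analytic on $\Omega_1$ with coefficients $\rho^{-|\alpha|}a_\alpha$, and $(1/\rho)\Omega_1\subset\Omega_2$ yields $\sup_{\Omega_1}\|\tilde f\|\le M$. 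Feeding $\tilde f$ into the Bohr inequality for $\Omega_1$ at radius $r$ and then discarding the geometric loss through $\sup_{\Omega_1}|z^\alpha|\ge\sup_{\Omega_2}|z^\alpha|$ (valid since $\Omega_2\subset\Omega_1$) collapses the estimate to $\sum_\alpha(r/\rho)^{|\alpha|}\|T(a_\alpha)\|\sup_{\Omega_2}|z^\alpha|\le\lambda M$, which is exactly the Bohr inequality for $\Omega_2$ at radius $r/\rho$. As $f$ was arbitrary, $B(\Omega_2,T,\lambda)\ge r/\rho$, that is $B(\Omega_1,T,\lambda)\le\rho\,B(\Omega_2,T,\lambda)$.

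Finally, (a) follows by feeding the definition of $S$ into (c). Writing $a:=S(\Omega_1,\Omega_2)$ and $b:=S(\Omega_2,\Omega_1)$, for every $\varepsilon>0$ we have $\Omega_1\subset(a+\varepsilon)\Omega_2$ and $\Omega_2\subset(b+\varepsilon)\Omega_1$, whence $\tfrac{1}{b+\varepsilon}\Omega_2\subset\Omega_1\subset(a+\varepsilon)(b+\varepsilon)\cdot\tfrac{1}{b+\varepsilon}\Omega_2$. Applying (c) with the domain $\tfrac{1}{b+\varepsilon}\Omega_2$ and factor $\rho=(a+\varepsilon)(b+\varepsilon)$, then removing the scaling via (b), gives $B(\Omega_1,T,\lambda)\le(a+\varepsilon)(b+\varepsilon)B(\Omega_2,T,\lambda)$; letting $\varepsilon\to0$ produces the upper bound. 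Since the product $S(\Omega_1,\Omega_2)S(\Omega_2,\Omega_1)$ is symmetric in its arguments, interchanging $\Omega_1$ and $\Omega_2$ in this upper bound yields the lower bound, completing (a). The only points demanding care are the $\varepsilon$-bookkeeping forced by $S$ being an infimum and the verification that each constructed function is genuinely bounded and analytic on its intended domain.
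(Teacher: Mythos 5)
Your proposal is correct, but it inverts the paper's logical architecture, so it is worth comparing the two routes. The paper proves the two-sided bound (a) directly: given $f$ on $\Omega_1$ it transplants the coefficients into $\Omega_2$ via $c_\alpha = a_\alpha/(m_{2,1}+\epsilon_1)^{|\alpha|}$, applies the Bohr inequality there at radius $B(\Omega_2,T,\lambda)-\epsilon_2$, and pulls the estimate back through $\Omega_1\subset(m_{1,2}+\epsilon_3)\Omega_2$, managing three epsilons and comparing suprema over explicitly constructed sets $A_1,\dots,A_4$; the scaling identity (b) and the monotone estimate (c) are then read off from (a) by substituting $S(\Omega,\rho\Omega)=1/\rho$, $S(\rho\Omega,\Omega)=\rho$, respectively $S(\Omega_1,\Omega_2)\le\rho$, $S(\Omega_2,\Omega_1)\le 1$. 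You go the other way: you prove (b) and (c) directly by a single transplantation $\tilde f(z)=f(z/\rho)$, and then deduce (a) from the inclusion chain $\tfrac{1}{b+\varepsilon}\Omega_2\subset\Omega_1\subset(a+\varepsilon)(b+\varepsilon)\cdot\tfrac{1}{b+\varepsilon}\Omega_2$ together with the symmetry of the product $S(\Omega_1,\Omega_2)S(\Omega_2,\Omega_1)$ for the lower bound. Your opening homogeneity reduction, $\sup_{z\in r\Omega}\|T(a_\alpha)z^\alpha\|_Y=r^{|\alpha|}\|T(a_\alpha)\|_Y\sup_{z\in\Omega}|z^\alpha|$, is the real gain: it separates the coefficient data from the geometry once and for all, collapses the paper's set comparisons to one-line identities, and cuts the epsilon bookkeeping from three parameters to one. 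What each route buys: the paper's direct attack on (a) makes (b) and (c) corollaries and never needs (c) as a standalone tool, while your decomposition makes (c) the elementary workhorse and yields a cleaner, more modular proof. One small point to make explicit: in (c) you use the Bohr inequality at the radius $r=B(\Omega_1,T,\lambda)$ itself; this is legitimate because the left-hand side $\sum_\alpha r^{|\alpha|}\|T(a_\alpha)\|_Y\sup_{\Omega}|z^\alpha|$ is nondecreasing and, by monotone convergence, left-continuous in $r$, so the set of admissible radii is closed and the "largest $r$" of the definition is attained --- alternatively, run the argument at $B(\Omega_1,T,\lambda)-\epsilon$ and let $\epsilon\to 0$, consistent with the epsilon care you already flag for the infimum defining $S$.
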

	\begin{proof}
		For simplicity, let us denote $m_{i,j} := S(\Omega_i, \Omega_j), ~ i=1,2$. The proof of one side of the inequality in $(1)$ is enough since by interchanging the role of $\Omega_1$ and $\Omega_2$, we will get the other inequality.  Consider the analytic function $f:\Omega_1 \to X$  with the series expansion \eqref{E3.1.3}. 
		
		\medskip
		
		Fix $\epsilon_1>0$ and for each multi-index $\alpha$, define
		\begin{equation*}
			c_\alpha := \dfrac{a_\alpha}{(m_{2,1}+\epsilon_1)^{|\alpha|}}.
		\end{equation*}
		Let $g:\Omega_2 \to X$ be a function defined by the series expansion 
		\begin{equation*}
			g(z) := \sum_{\alpha \in \mathbb{N}_0^n} c_\alpha z^\alpha =  \sum_{\alpha \in \mathbb{N}_0^n} a_\alpha \left(\dfrac{z}{m_{2,1}+\epsilon_1}\right)^\alpha.
		\end{equation*}
		By the definition of $m_{2,1}$, we have $\Omega_2 \subset \beta \cdot \Omega_1$, for any $\beta \geq m_{2,1}$ and so we obtain $\Omega_2 \subset (m_{2,1}+\epsilon_1) \cdot \Omega_1$.  In other words, for any $z \in \Omega_2$, we get $z/(m_{2,1}+\epsilon_1) \in \Omega_1$ and as a result $g$ is analytic on $\Omega_2$. 
		
		\medskip
		
		Now fix $0<\epsilon_2<B(\Omega_2,T,\lambda)$ and let $r= B(\Omega_2,T,\lambda)-\epsilon_2$. Then we have
		
		\begin{equation}\label{E3.2.8}
			\sum_{\alpha \in \mathbb{N}_0^n} \sup_{z \in r \cdot \Omega_2} \|T(c_\alpha z^\alpha)\|_Y \leq \lambda \sup_{z \in \Omega_2}\|g(z)\|_X. 
		\end{equation} 
		Now, consider the sets defined by 
		\begin{equation*}
			A_1:= \left\{ \left\|\sum_{\alpha \in {\mathbb{N}_0}^n} c_\alpha z^\alpha \right\|_X : z \in (m_{2,1}+\epsilon_1)\cdot \Omega_1\right \} ~\text{and} ~  A_2:= \left\{ \left\|\sum_{\alpha \in {\mathbb{N}_0}^n} c_\alpha z^\alpha \right\|_X : z \in \Omega_2\right \}.
		\end{equation*}
		The definition of $m_{2,1}$ implies $A_2 \subseteq A_1$ and hence $\sup A_2 \leq \sup A_1$. Now by simple norm estimates, we obtain that 
		\begin{align*}
			\sup_{z \in \Omega_2}\|g(z)\|_X =\sup_{z \in \Omega_2} \left\|\sum_{\alpha \in {\mathbb{N}_0}^n} c_\alpha z^\alpha \right\|_X &\leq \sup_{z \in (m_{2,1}+\epsilon_1) \cdot \Omega_1}  \left\|\sum_{\alpha \in {\mathbb{N}_0}^n} c_\alpha z^\alpha\right\|_X \\&= \sup_{z \in \Omega_1} \left\|\sum_{\alpha \in {\mathbb{N}_0}^n} c_\alpha [(m_{2,1}+\epsilon_1)z]^\alpha\right\|_X
			= \sup_{z \in \Omega_1}\|f(z)\|_X.
		\end{align*}
		Then the inequality \eqref{E3.2.8} will reduce to 
		\begin{equation}\label{E3.2.10}
			\sum_{\alpha \in \mathbb{N}_0^n} \sup_{z \in r\Omega_2} \left\|T(a_\alpha) \left(\dfrac{z}{m_{2,1}+\epsilon_1}\right)^\alpha\right\|_Y \leq \lambda \sup_{z \in \Omega_1}\|f(z)\|_X.
		\end{equation}
		Now fix $\epsilon_3>0$. Since $\Omega_1 \subseteq (m_{1,2}+\epsilon_3) \cdot \Omega_2$, we have the following implication :
		\begin{equation*}
			z \in r \cdot \Omega_1 \implies \dfrac{z}{m_{1,2}+\epsilon_3} \in r \cdot \Omega_2.
		\end{equation*}
		Let us consider the sets defined by 
		\begin{align*}
			A_3&:= \left\{  \left\|T(a_\alpha) \left(\dfrac{z}{(m_{2,1}+\epsilon_1)(m_{1,2}+\epsilon_3)}\right)^\alpha\right\|_Y:z \in r \cdot \Omega_1\right \}, \\ ~\text{and }~  A_4&:= \left\{  \left\|T(a_\alpha) \left(\dfrac{z}{m_{2,1}+\epsilon_1}\right)^\alpha\right\|_Y :z \in r \cdot \Omega_2\right \}.
		\end{align*}
		By the definition of $m_{1,2}$, we have $A_3 \subset A_4$ and hence we have $\sup (A_3) \leq \sup(A_4)$. As a result, we can deduce that 
		\begin{align*}
			\sum_{\alpha \in \mathbb{N}_0^n} \sup_{z \in r \cdot \Omega_1} \left\|T(a_\alpha) \left(\dfrac{z}{(m_{2,1}+\epsilon_1)(m_{1,2}+\epsilon_3)}\right)^\alpha\right\|_Y & \leq  \sum_{\alpha \in \mathbb{N}_0^n} \sup_{z \in r \cdot \Omega_2} \left\|T(a_\alpha) \left(\dfrac{z}{m_{2,1}+\epsilon_1}\right)^\alpha\right\|_Y \\&\leq \lambda \sup_{z \in \Omega_1}\|f(z)\|_X,
		\end{align*}
		where the last inequality is due to \eqref{E3.2.10}. This implies that
		$$\sum_{\alpha \in \mathbb{N}_0^n} \sup_{z \in \delta \cdot \Omega_1} \left\|T(a_\alpha z ^\alpha) \right\|_Y \leq \lambda \sup_{z \in \Omega_1}\|f(z)\|_X,$$
		where 
		$$\delta = \dfrac{B(\Omega_1,T,\lambda) - \epsilon_2}{(m_{1,2}+e_3)(m_{2,1}+\epsilon_1)}.$$
		Since $\epsilon_i, ~ i=1,2,3$ are arbitrary, we conclude that
		$$B(\Omega_1,T,\lambda)\geq \dfrac{B(\Omega_2,T,\lambda)}{m_{1,2}\cdot m_{2,1}}.$$ 
		
		\medskip
		
		\noindent For the proof of $(2)$, note that for any bounded complete Reinhardt domain $\Omega$ of $\mathbb{C}^n$ and $\rho>0$, we have
		$$S(\Omega, \rho \cdot \Omega) = \dfrac{1}{\rho} ~ \text{and} ~ S(\rho\cdot \Omega,  \Omega) =\rho, $$
		which we put in the inequality $(1)$, we will obtain the desired equality. The proof of $(3)$ is obvious from the fact for $\Omega_2 \subset \Omega_1 \subset \rho \cdot \Omega_2, ~ \rho>0$, we have
		$$S(\Omega_1, \Omega_2) \leq \rho ~ \text{and} ~ S(\Omega_2,\Omega_1) \leq 1.$$
		Hence, we completed the proof.
	\end{proof}
	
	\begin{remark}
		If we take $X=\mathbb{C}, ~ \lambda=1$, and $T$ as the identity operator on $X$ with $|f(z)| \leq 1$, then Lemma \ref{L3.2.2} reduces to Lemma 3.1 of\cite{DE04}. Also note that all three results in Lemma \ref{L3.2.2} are valid for $B_m(\Omega,T,\lambda)$, for any $m \in \mathbb{N}$.
	\end{remark}
	
	
	In the following lemma, we provide an upper estimate for the $\lambda$-second Bohr radius $B(B_{W_n}, T, \lambda)$.  If we take $\lambda=1$ and $X=\mathbb{C}$ with $|f(z)|<1$, we get Theorem 4.2 of \cite{DE03}. Though we are using a similar approach as in the proof of Theorem 4.2 of \cite{DE03}, we will provide the proof for the sake of completeness. The result is presented below. 
	\begin{lemma}\label{L3.5.1}
		Let  $W_n=(\mathbb{C}^n, \|\cdot\|)$ be a Banach space such that $\chi((e_k))=1$, where $(e_k)$ denote the canonical basis for $W_n$. Then for each $n \in \mathbb{N}$, we have
		$$B(B_{W_n}, X, \lambda)\leq \dfrac{ \lambda e^3 2^{3/2} \sqrt{\log n}\sup _{\|z\| <1 }\|z\|_2}{\sup _{\|z\| <1 }\|z\|_1}.$$
	\end{lemma}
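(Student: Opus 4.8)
The plan is to reduce the vector-valued statement to a single scalar homogeneous polynomial and then to feed in a Kahane--Salem--Zygmund--type test polynomial, exactly as in the unconditional--basis--constant estimate of \cite{DE03}. First I would reduce to the scalar case. The inequality defining $B(B_{W_n},X,\lambda)$ must hold for \emph{every} $X$-valued analytic $f$, in particular for the rank-one functions $f(z)=g(z)\,x_0$ with $g$ scalar and $\|x_0\|=1$; for such $f$ both sides collapse to the corresponding scalar inequality with the \emph{same} $\lambda$ (since $\|T\|=1$ for $T=\mathrm{id}_X$). Hence $B(B_{W_n},X,\lambda)\le B(B_{W_n},\mathbb C,\lambda)$, and it suffices to bound the scalar quantity. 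Next, by Remark \ref{R3.2.1} we have $B(B_{W_n},\mathbb C,\lambda)\le B_m(B_{W_n},\mathbb C,\lambda)$ for \emph{every} $m\in\mathbb N$, so it is enough to produce a good upper bound for a single $B_m$ at a well-chosen degree $m$.

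Second, I would rewrite $B_m$ in terms of coefficient sums. For an $m$-homogeneous $P=\sum_{|\alpha|=m}c_\alpha z^\alpha$, homogeneity gives $\sup_{z\in r\cdot B_{W_n}}|c_\alpha z^\alpha|=r^m|c_\alpha|\sup_{z\in B_{W_n}}|z^\alpha|$, so the defining inequality becomes $r^m\sum_{|\alpha|=m}|c_\alpha|\sup_{B_{W_n}}|z^\alpha|\le\lambda\|P\|_{\mathcal P(^mW_n)}$. Consequently, exhibiting \emph{one} polynomial $P$ for which the ratio $\sum_{|\alpha|=m}|c_\alpha|\sup_{B_{W_n}}|z^\alpha|\big/\|P\|_{\mathcal P(^mW_n)}$ is large forces $B_m$, and hence $B(B_{W_n},X,\lambda)$, to be small; this ratio is precisely the quantity governed by the unconditional basis constant $\chi_M(\mathcal P(^mW_n))$.

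Writing $S_1:=\sup_{\|z\|<1}\|z\|_1=S(B_{W_n},B_1^n)$ and $S_2:=\sup_{\|z\|<1}\|z\|_2=S(B_{W_n},B_2^n)$ (cf. \eqref{D3.1.3}), the test polynomial I would use is an $m$-homogeneous polynomial with unimodular coefficients. For the numerator, the hypothesis $\chi((e_k))=1$ lets me pass to the moduli of the coordinates (the unit ball is solid), and the multinomial identity $\sum_{|\alpha|=m}\binom{m}{\alpha}|z^\alpha|=\|z\|_1^m$ relates $\sum_{|\alpha|=m}|c_\alpha|\sup_{B_{W_n}}|z^\alpha|$, down to combinatorial weights, to a factor of order $S_1^m$. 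For the denominator, the inclusion $B_{W_n}\subset S_2\cdot B_2^n$ gives $\|P\|_{\mathcal P(^mW_n)}\le S_2^m\,\|P\|_{\mathcal P(^mB_2^n)}$, and the Kahane--Salem--Zygmund estimate controls $\|P\|_{\mathcal P(^mB_2^n)}$ by $(\mathrm{const})^m\sqrt{m\log n}$ times the square root of the number of coefficients. Combining the two produces a lower bound of the shape $\big(S_1/(c\,S_2)\big)^m$ for the ratio, up to a factor polynomial in $m$ and $\log n$.

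Finally, taking $m$th roots yields $B_m\le\lambda^{1/m}\,\tfrac{c\,S_2}{S_1}\,(\mathrm{poly})^{1/m}$; since $\lambda\ge\|T\|=1$ we may bound $\lambda^{1/m}\le\lambda$, and choosing $m\asymp\log n$ collapses the polynomial correction and fixes the universal constant, giving $B(B_{W_n},X,\lambda)\le B_m\le \lambda e^3 2^{3/2}\sqrt{\log n}\,S_2/S_1$. The main obstacle is the third step: obtaining the Kahane--Salem--Zygmund / unconditional-basis-constant estimate with the correct $S_1,S_2$ dependence and the sharp constants $e^3 2^{3/2}$ (the technical content imported from Theorem 4.2 of \cite{DE03}), together with the clean use of the $1$-unconditionality $\chi((e_k))=1$ to carry out the combinatorial lower bound on the numerator.
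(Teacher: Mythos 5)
Your proposal is correct and follows essentially the same route as the paper's proof: bound $B(B_{W_n},X,\lambda)$ by $B_m(B_{W_n},X,\lambda)$, use homogeneity to turn the defining inequality into the bound $\chi_M(\mathcal{P}(^m W_n))\leq \lambda/B_m(B_{W_n},X,\lambda)^m$, import the Kahane--Salem--Zygmund-based estimates of Lemma 4.1 and Theorem 4.2 of \cite{DE03} with the $\sup_{\|z\|<1}\|z\|_1$ and $\sup_{\|z\|<1}\|z\|_2$ dependence, choose $m=[\log n]$, and absorb $\lambda^{1/m}\leq\lambda$ using $\lambda\geq\|T\|=1$. Your explicit preliminary reduction to scalar-valued functions via rank-one test functions $f(z)=g(z)x_0$ is a harmless (and correct) way of making precise what the paper does implicitly when it invokes the scalar estimate from \cite{DE03} inside the vector-valued setting.
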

	\begin{proof}
		Consider an $m$-homogeneous polynomial $P(z) = \sum_{|\alpha|=m}a_\alpha z^\alpha$. Then for any $\zeta_\alpha \in \mathbb{C}$ with $|\zeta_\alpha| \leq 1$, we have
		\begin{align*}
			\left \|\sum_{|\alpha|=m}\zeta_\alpha a_\alpha z^\alpha \right\|_{\mathcal{P}(^m W_n)} \leq  \sup _{\|z\|<1} \sum_{|\alpha|=m} \|\zeta_\alpha a_\alpha z^\alpha \|
			& \leq \sup _{\|z\|<1}  \sum_{|\alpha|=m} \| a_\alpha z^\alpha \|\\
			&\leq \sum_{|\alpha|=m} \sup _{\|z\|<1}\| a_\alpha z^\alpha \|\\
			& \leq \dfrac{\lambda}{B_m(B_{W_n},X,\lambda)^m}  \left \|\sum_{|\alpha|=m}a_\alpha z^\alpha \right\|_{\mathcal{P}(^m W_n)},
		\end{align*}
		where the last inequality is due to the fact that
		$$B_m(B_{W_n},X,\lambda)^m \sum_{|\alpha|=m} \sup _{\|z\|<1}\| a_\alpha z^\alpha \| \leq \lambda \left\|\sum_{|\alpha|=m}a_\alpha z^\alpha \right\|_{\mathcal{P}(^m X)}.$$
		By the definition of $\chi_M(\mathcal{P}(^m W_n))$, we have
		\begin{equation*}
			\chi_M(\mathcal{P}(^m W_n)) \leq \dfrac{\lambda}{B_m(B_{W_n},X,\lambda)^m}.
		\end{equation*}
		Since  for each $m \in \mathbb{N}$, $B(B_{W_n},X,\lambda)^m \leq B_m(B_{W_n},X,\lambda)^m$, we get 
		\begin{equation*}
			B(B_{W_n}, X, \lambda)^m \leq \dfrac{\lambda}{\chi_M(\mathcal{P}(^m W_n))}. 
		\end{equation*}
		Using the same ideas as in the proof of Lemma 4.1 of \cite{DE03}, for each $m \in \mathbb{N}$, we can deduce that 
		\begin{align*}
			B(B_{W_n}, X, \lambda) \leq \left(\lambda \sqrt{m! \log n} 2^{(3m-1)/2} m^{3/2} \dfrac{ \sup _{\|z\| <1 }\|z\|_1}{\sup _{\|z\| <1 }\|z\|_2}\right)^{1/m} \left( \dfrac{ \sup _{\|z\| <1 }\|z\|_2}{\sup _{\|z\| <1 }\|z\|_1}\right).
		\end{align*}
		By taking $m=1$ for the case $n=2$ and  $m=[\log n]$ for $n>2$, we have the inequality from the proof of Theorem 4.2 of \cite{DE03},
		$$\left(\lambda \sqrt{m! \log n} 2^{(3m-1)/2} m^{3/2} \dfrac{ \sup _{\|z\| <1 }\|z\|_1}{\sup _{\|z\| <1 }\|z\|_2}\right)^{1/m} \leq \lambda^{1/m} e^3 2^{3/2} \sqrt{\log n} \leq \lambda e^3 2^{3/2} \sqrt{\log n},$$
		which gives our desired upper estimate. Hence we completed the proof.
	\end{proof}
	
	\section{Proof of the Main Results}
	\subsection{\bf Proof of Theorem \ref{3Tm1}:} Let us denote $(e_k)$ be the standard basis vectors for $B_p^m$. If $p=\infty$, then consider the function
	\begin{equation*}
		f(z):= e_1+e_2z_2=(1,z_2,0,\dots,0), ~ z=(z_1,\dots,z_n) \in \mathbb{D}^n.
	\end{equation*}
	Observe that $\|f(z)\|_\infty= 1 $ and therefore $\sup_{z \in \mathbb{D}^n}\|f(z)\|_\infty =1$. But note that for any $0 < r <1$ we have
	\begin{equation*}
		\sup_{z \in r\mathbb{D}^n} \|e_1\|+ \sup_{z \in r\mathbb{D}^n} \|e_2z_2\|=1+r>1=\sup_{z \in \mathbb{D}^n}\|f(z)\|_\infty.
	\end{equation*}Hence $B(\mathbb{D}^n,\overline{B_p^m},1) =0$ in this case.
	
	\medskip
	
	For the case $1<p<\infty$, we first observe that for any $x>0$,
	$$x^{1/p} - (x-1)^{1/p} =\dfrac{ c^{1/p-1}}{p}, ~ \text{for some} ~ c \in (x-1,x),$$
	which gives 
	$$\lim_{x \to \infty}( x^{1/p} - (x-1)^{1/p}) = 0.$$
	That is, for any $\epsilon>0$, there exists an $R>0$ such that
	$$|x^{1/p} - (x-1)^{1/p}| <\epsilon, ~ \text{for} ~ |x|>R.$$
	In other words, there exists $\mu \in (0,1)$ such that
	\begin{equation}\label{E3.2.4}
		1 - (1-\mu)^{1/p} <\epsilon \mu^{1/p}.   
	\end{equation}
	Now consider the function $f$ defined by
	\begin{align*}
		f(z) := (1-\mu)^{1/p}e_1 + \mu^{1/p} e_2 z_2. 
	\end{align*}
	Simple computations give  
	$$\|f(z)\|_p = (1-\mu)+\mu |z_2| = 1-\mu+\mu  <1,$$ 
	and as a result, we get $\sup_{z \in \mathbb{D}^n}\|f(z)\|_p =1.$ But observe that
	$$(1-\mu)^{1/p}\|e_1\|_p+ \sup_{z \in \epsilon \mathbb{D}^n}\mu^{1/p} \|e_2 z_2\|_p = (1-\mu)^{1/p}+ \epsilon\mu^{1/p}>1 =\sup_{z \in \mathbb{D}^n}\|f(z)\|_p ,$$
	where the last inequality is from \eqref{E3.2.4}. This gives $B(\mathbb{D}^n,\overline{B_p^m},1) =0$.
	
	\medskip
	Now consider the case $p=1$ and let $\epsilon>0$. Similarly as in \eqref{E3.2.4}, there exists $\eta \in (0,1)$ such that
	\begin{equation}\label{E3.2.5}
		1-\sqrt{1-\eta} < \epsilon \sqrt{\eta}.
	\end{equation}
	Let us define the function 
	\begin{align}\label{E3.2.6}
		\notag f(z) &:= \left(\dfrac{\sqrt{1-\eta}+z_2\sqrt{\eta}}{2}, \dfrac{\sqrt{1-\eta}-z_2\sqrt{\eta}}{2},0,\dots, 0\right) \\&=  \dfrac{\sqrt{1-\eta}}{2}(1,1,0,\dots,0)+ \dfrac{\sqrt{\eta}}{2}(1,-1,0,\dots,0)z_2.
	\end{align}
	Note that in the definition of the function $f$ given by \eqref{E3.2.6}, the number $0$ is counted $m-2$ times.
	An easy observation shows that
	\begin{align*}
		\|f(z)\|_1 &=\dfrac{|\sqrt{1-\eta}+z_2\sqrt{\eta}|+|\sqrt{1-\eta}-z_2\sqrt{\eta}|}{2}\\ &\leq  \left(\dfrac{|\sqrt{1-\eta}+z_2\sqrt{\eta}|^2+|\sqrt{1-\eta}-z_2\sqrt{\eta}|^2}{2}\right)^{1/2} = 1-\eta+\eta |z_2|^2 <1,
	\end{align*}
	and so we have $\sup_{z \in \mathbb{D}^n}\|f(z)\|_1<1.$ On the other hand, using \eqref{E3.2.5} we get
	\begin{align*}
		\dfrac{\sqrt{1-\eta}}{2}\|(1,1,0,\dots,0)\|_1+ \sup_{z \in \epsilon\mathbb{D}^n}\dfrac{\sqrt{\eta}}{2}\|(1,-1,0,\dots,0)\|_1 |z_2|= \sqrt{1-\eta}+\epsilon \sqrt{\eta}>1.
	\end{align*}
	Comparing the lower bound with $\sup_{z \in \mathbb{D}^n}\|f(z)\|_1$, we conclude that $B(\mathbb{D}^n,\overline{B_p^m},1) =0$. Hence, the proof is completed.
	\hfill{$\Box$}
	
	\subsection{\bf Proof of Theorem \ref{3T1}}  For any $z \in \mathbb{C}^n$, we have 
	$$\|z\|_\infty \leq \|z\|_p ~ \text{and} ~ \|z\|_p \leq n^{1/p} \|z\|_\infty,$$
	which gives
	\begin{equation}\label{E3.2.11}
		S(\mathbb{D}^n, B_p^n) \leq n^{1/p} ~ \text{and} ~ S(B_p^n, \mathbb{D}^n)\leq 1.
	\end{equation}
	Considering the vectors $z=(1, \dots,1)$ and  $z=(1,0, \dots, 0)$ respectively, equality happens in both quantities in \eqref{E3.2.11}.
	
	\medskip
	By putting the values of $S(\mathbb{D}^n, B_p^n)$ and $S(B_p^n, \mathbb{D}^n)$ in $(1)$ of Lemma \ref{L3.2.2}, we obtain the following.
	\begin{align}\label{E3.2.12}
		B(B_p^n,T,\lambda) \geq \dfrac{B(\mathbb{D}^n,T,\lambda)}{n^{1/p}} 
		=  \dfrac{K(\mathbb{D}^n,T,\lambda)}{n^{1/p}}, 
	\end{align}
	where the last equality is due to Proposition \ref{3P1}. Now, apply the lower bound for $K(\mathbb{D}^n,T,\lambda)$ from Proposition 3.3 of \cite{DE12} to the inequality \eqref{E3.2.12}, and we obtain the desired upper bound. Also note that if $\|T\| < \lambda$, then we have $\rho, \sigma_1$ and $\sigma_2$ are all strictly positive, and as a result, we conclude that $B(B_p^n,T,\lambda)>0$.	\hfill{$\Box$}
	
	\subsection{\bf Proof of Theorem \ref{LOGTHM}.} 
	Fix a  $\psi \in X^*$ with $\|\psi\|<1$ and $z \in B_p^n$. Now define the function $g:\mathbb{D} \to \mathbb{C}$ given by
	\begin{equation*}
		g(w):= \psi(f(zw))=\psi\Bigg(\sum_{\alpha \in \mathbb{N}_0^n} a_\alpha z^\alpha w^{|\alpha|}\Bigg)=\psi(a_0)+\sum_{k=1}^\infty \Bigg(\sum_{|\alpha|=k} \psi(a_\alpha)z^\alpha\Bigg) w^k.
	\end{equation*}
	It is clear that $|g(w)|=|\psi(f(zw))| \leq \|\psi\|\|f(zw\|<1$. Now applying the Weiner inequality for the function $g$, we have that
	\begin{equation}\label{HBINEQ}
		\Bigg | \psi \Bigg(\sum_{|\alpha|=k} a_\alpha z^\alpha \Bigg)\Bigg|=\Bigg | \sum_{|\alpha|=k} \psi(a_\alpha)z^\alpha \Bigg| \leq 1-|\psi(a_0)|^2.
	\end{equation}
	Since \eqref{HBINEQ} is valid for any $\psi \in X^*$ such that $\|\psi\|<1$ and any choice of  $z \in B_p^n$, as a consequence of the Hahn-Banach theorem, for any $k \in \mathbb{N}$, we have 
	\begin{equation}\label{WEINER}
		\sup _{z \in B_p^n} \Bigg \|  \sum_{|\alpha|=k} a_\alpha z^\alpha \Bigg\| \leq 1-\|a_0\|^2.
	\end{equation}
	Now for a fixed $\psi \in X^*$ with $\|\psi\|<1$, choose $\zeta_\alpha \in \overline{\mathbb{D}}$ such that $\zeta_\alpha \psi(a_\alpha)=|\psi(a_\alpha)|$. Now, for any $m \in \mathbb{N}$, we deduce that
	\begin{align}\label{INEQFINAL}
		\nonumber   \sum_{|\alpha|=m}\|a_\alpha\| \dfrac{1}{n^{m/p}} =  \sum_{|\alpha|=m}\|a_\alpha\| \Bigg(\dfrac{1}{n^{1/p}}\Bigg)^\alpha &=  \sum_{|\alpha|=m} \sup_{\|\psi\|<1}|\psi(a_\alpha)| \Bigg(\dfrac{1}{n^{1/p}}\Bigg)^\alpha\\
		\nonumber   &= \sum_{|\alpha|=m} \sup_{\|\psi\|<1}\zeta_\alpha \psi(a_\alpha) \Bigg(\dfrac{1}{n^{1/p}}\Bigg)^\alpha\\
		\nonumber    &= \sup_{\|\psi\|<1} \psi \Bigg(\sum_{|\alpha|=m} \zeta_\alpha a_\alpha \Bigg(\dfrac{1}{n^{1/p}}\Bigg)^\alpha\Bigg)\\
		\nonumber    & \leq \sup_{z \in B_p^n}\Bigg\|\sum_{|\alpha|=m} a_\alpha\zeta_\alpha z^\alpha\Bigg\|\\
		&\leq \chi_M(\mathcal{P}(^m l_p^n)\sup_{z \in B_p^n}\Bigg\|\sum_{|\alpha|=m} a_\alpha z^\alpha\Bigg\|
	\end{align}
	From the definition of $K(B_p^n,T,\lambda)$, it is clear that
	\begin{align*}
		\left \|\sum_{|\alpha|=m}T(\zeta_\alpha a_\alpha) z^\alpha \right\| \leq  \sup _{\|z\|_p<1} \sum_{|\alpha|=m} \|\zeta_\alpha T( a_\alpha) z^\alpha \|
		& \leq \sup _{\|z\|_p<1}  \sum_{|\alpha|=m} \| T(a_\alpha) z^\alpha \|\\
		&\leq \dfrac{\lambda}{K(B_P^n,T,\lambda)^m}  \left \|\sum_{|\alpha|=m}a_\alpha z^\alpha \right\|,
	\end{align*}
	which gives 
	\begin{equation}\label{chi}
		\chi_M(\mathcal{P}(^m l_p^n) \leq \dfrac{\lambda}{K(B_P^n,T,\lambda)^m}. 
	\end{equation}
	Applying \eqref{chi} and \eqref{WEINER} to \eqref{INEQFINAL}, we get
	\begin{equation}\label{NORMINEQ}
		\sum_{|\alpha|=m}\|a_\alpha\| \leq \dfrac{\lambda (1-\|a_0\|^2)n^{m/p}}{K(B_P^n,T,\lambda)^m}.
	\end{equation}
	As a consequence of Lemma 3.5 of \cite{DE04}, for any $ \alpha ~\text{such that } ~ |\alpha|=m$, we obtain that
	\begin{equation}\label{E3.4.20}
		S_{p,\alpha}:=\sup_{\|z\|_p<1} |z^\alpha| = \left(\dfrac{\alpha^\alpha}{|\alpha|^{|\alpha|}}\right)^{1/p}\geq \dfrac{1}{m^{m/p}}. 
	\end{equation}As a result, for any $r \in [0,1)$, we have
	\begin{align}\label{NORRMINEQ}
		\nonumber  \sum_{\alpha \in \mathbb{N}_0^n}\sup_{\|z\|_p<r} \|T(a_\alpha) z^\alpha\|&=\|T(a_0)\|+\sum_{k=1}^\infty \sum_{|\alpha|=k}\|T(a_\alpha)\| \sup_{\|z\|_p<1}|(rz)^\alpha|\\
		\nonumber    &\leq \lambda\Bigg(\|a_0\|+\sum_{k=1}^\infty r^k\sum_{|\alpha|=k}\|a_\alpha\| \sup_{\|z\|_p<1}|z^\alpha|\Bigg)\\
		&=\lambda\Bigg(\|a_0\|+\sum_{k=1}^\infty r^k\sum_{|\alpha|=k}\|a_\alpha\| \Bigg(\dfrac{\alpha^\alpha}{m^m}\Bigg)^{1/p}\Bigg).
	\end{align}
	Since for any $\alpha \in \mathbb{N}_0^n$ with $|\alpha|=m$,  $\alpha^\alpha \leq |\alpha|^{|\alpha|}=m^m$, and applying \eqref{NORMINEQ} to \eqref{NORRMINEQ},  we get 
	\begin{align*}
		\sum_{\alpha \in \mathbb{N}_0^n}\sup_{\|z\|_p<r} \|T(a_\alpha) z^\alpha\|&\leq \lambda \Bigg(\|a_0\|+\sum_{k=1}^\infty r^k\sum_{|\alpha|=k}\|a_\alpha\|\Bigg)\\
		&\leq \lambda \Bigg(\|a_0\|+\lambda (1-\|a_0\|^2)\sum_{k=1}^\infty \Bigg(\dfrac{rn^{1/p}}{K(B_P^n,X,\lambda)}\Bigg)^k\Bigg).
	\end{align*}
	Notice that if $r$ satisfies the inequality
	\begin{equation*}
		r \leq \dfrac{K(B_p^n,T,\lambda)}{(\lambda(1+\|a_0\|)+1)n^{1/p}},
	\end{equation*}
	we get 
	\begin{equation*}
		\sum_{k=1}^\infty \Bigg(\dfrac{rn^{1/p}}{K(B_P^n,X,\lambda)}\Bigg)^k \leq \sum_{k=1}^\infty \Bigg(\dfrac{1}{\lambda(1+\|a_0\|)+1} \Bigg)^k=\dfrac{1}{\lambda(1+\|a_0\|)},
	\end{equation*}
	which gives
	\begin{align*}
		\sum_{\alpha \in \mathbb{N}_0^n}\sup_{\|z\|_p<r} \|T(a_\alpha) z^\alpha\|&\leq \lambda\Bigg(\|a_0\|+\dfrac{1-\|a_0\|^2}{1+\|a_0\|}\Bigg)=\lambda=\lambda \sup_{\|z\|_p<1} \|f(z)\|_X.
	\end{align*}
	This shows that 
	\begin{equation*}
		B(B_p^n,T,\lambda) \geq \dfrac{K(B_p^n,T,\lambda)}{(\lambda(1+\|a_0\|)+1)n^{1/p}}\geq \dfrac{K(B_p^n,T,\lambda)}{(1+2\lambda)n^{1/p}}.
	\end{equation*}
	Now applying Theorem 3.6 of \cite{SH23} to $K(B_p^n,T,\lambda)$, there exists $C_0>0$ such that
	\begin{equation*}
		B(B_p^n,T,\lambda) \geq \dfrac{C_0}{1+2\lambda} \Bigg(\dfrac{\|T\|-\lambda}{\|T\|-2\lambda}\Bigg) \dfrac{(\log n)^{1-\frac{1}{q}}}{n^{\frac{pq-p+q}{pq}}}.
	\end{equation*}
	Hence, the proof is completed.

	\subsection{\bf Proof of Theorem \ref{THM2.12}} 
	Given that $T$ is the identity operator on $X$, where $X$ is a finite-dimensional Banach space. Applying  Theorem 4.1 of \cite{DE12} to the inequality \eqref{E3.2.12}, there exists a constant $F_X$ such that
	\begin{align*}\label{E3.2.13}
		B(B_p^n,X,\lambda) &\geq 
		\dfrac{K(\mathbb{D}^n,X,\lambda)}{n^{1/p}} \geq \dfrac{F_X (\lambda-1)  \sqrt{\log n}}{(2 \lambda-1)(n^{1/p+1/2})}, 
	\end{align*}
	where $K(\mathbb{D}^n,X,\lambda)$ is the vector-valued Bohr radius for analytic functions defined on $\mathbb{D}^n$ under the case where $T$ is the identity operator on $X$. This gives the required lower bound.
	
	From Lemma \ref{L3.2.2}, we have
	$$B(B_p^n,X,\lambda) \leq n^{1/p} B(\mathbb{D}^n,X,\lambda)= n^{1/p} K(\mathbb{D}^n,X,\lambda).$$
	Similar to the case of lower estimates, use the upper bound for $K(\mathbb{D}^n,T,\lambda)$ which is given in \cite[Theorem 4.1]{DE12} to obtain the desired upper bound.
	\hfill{$\Box$}
	\subsection{Proof of Theorem \ref{T3.4.3}} If $B(B_p^n,X,\lambda)=0$, nothing to prove. So we omit this case. Now, let $(e_k)$ be the Schauder basis for $X$ with $\|e_k\|_X=1$. For a fixed $m \in \mathbb{N}$ consider the function $f : B_p^n \to X$, which is defined by
	$$f(z) := g(z) e_1 := \sum_{|\alpha|=m} a_\alpha z^\alpha,$$
	where $g:B_p^n \to \mathbb{C}$ be the polynomial  defined by 
	\begin{equation*}
		g(z) := \sum_{|\alpha|=m} \dfrac{m!}{\alpha !} z^\alpha.
	\end{equation*}
	Obviously we have $$a_\alpha = \dfrac{m! e_1}{\alpha!} ~ \text{and} ~ \|f(z)\|_X = |g(z)|.$$
	Take $0<\epsilon<B(B_p^n,X,\lambda)$ and define $r= B(B_p^n,X,\lambda)-\epsilon$. It is to be noted that since the ball $B_p^n$ is a complete Reinhardt domain, from the proof of Lemma 2.1 of \cite{DE04}, we deduce that
	\begin{align*}
		\sup_{\|z\|_p<1} \left\| \sum_{|\alpha|=m} \zeta_\alpha a_\alpha z^\alpha\right\|_X  &=\sup_{\|z\|_p<1} \left\| \sum_{|\alpha|=m} \zeta_\alpha\dfrac{m!e_1}{\alpha !} z^\alpha\right\|_X\\&=\sup_{\|z\|_p<1} \left|\sum_{|\alpha|=m}\dfrac{m!}{\alpha!}\zeta_\alpha z^\alpha \right|= \sup_{\|z\|_p<1} \sum_{|\alpha|=m} \left|\dfrac{m!}{\alpha !}\zeta_\alpha z^\alpha\right|,
	\end{align*}
	where  for each  $\alpha~ \text{with} ~ |\alpha|=m$, $(\zeta_\alpha)$ is an independent standard Bernoulli random variable as in the proof of Theorem 3.3 of \cite{DE04}.
	This gives the inequality
	\begin{align}\label{E3.4.2}
		\notag  \sum_{|\alpha|=m} \sup _{\|z\|_p<r} \|T(a_\alpha) \zeta_\alpha z^\alpha\|_X &\leq \lambda \sup_{\|z\|_p<1}\left\|\sum_{|\alpha|=m}a_\alpha\zeta_\alpha z^\alpha \right\|_X\\ 
		&= \lambda \sup_{\|z\|_p<1} \left|\sum_{|\alpha|=m}\dfrac{m!}{\alpha!}\zeta_\alpha z^\alpha \right|.
	\end{align}
	
	\noindent But note that 
	\begin{align*}
		\sum_{|\alpha|=m} \sup _{\|z\|_p<r} \|T(a_\alpha)\zeta_\alpha z^\alpha\|_X &=\sum_{|\alpha|=m} r^m \sup _{\|z\|_p<1} \left\|T\left(\dfrac{m! e_1}{\alpha!}\right)\zeta_\alpha z^\alpha \right\|_X \\
		&=\sum_{|\alpha|=m} \dfrac{r^m m!}{\alpha!} \|T(e_1)\|_X \sup _{\|z\|_p<1}|z^\alpha| \\
		&= \sum_{|\alpha|=m} \dfrac{r^m m!}{\alpha!}  \sup _{\|z\|_p<1}|z^\alpha|.
	\end{align*}
	Then by  inequality \eqref{E3.4.2}, we have
	\begin{equation}\label{E3.4.7}
		\sum_{|\alpha|=m} \dfrac{r^m m!}{\alpha!}  \sup _{\|z\|_p<1}|z^\alpha| \leq \lambda \sup_{\|z\|_p<1} \left|\sum_{|\alpha|=m} \dfrac{m!}{\alpha!} \zeta_\alpha z^\alpha \right|. 
	\end{equation}
	By the multinomial expansion, we have 
	\begin{equation} \label{E3.4.21}
		\sum_{|\alpha|=m} \dfrac{m!}{\alpha!} =n^m.
	\end{equation}
	Combining \eqref{E3.4.20} and \eqref{E3.4.21} along with \eqref{E3.4.20}, we get 
	\begin{align*}
		\sum_{|\alpha|=m} \dfrac{r^m m!}{\alpha!}  \sup _{\|z\|_p<1}|z^\alpha| &= r^m  \sum_{|\alpha|=m} \dfrac{m!}{\alpha!}\left(\dfrac{\alpha^\alpha}{|\alpha|^{|\alpha|}}\right)^{1/p} \\
		&\geq \left(\dfrac{r}{m^{1/p}}\right)^m \sum_{|\alpha|=m}\dfrac{m!}{\alpha!}\\
		&=  \left(\dfrac{rn}{m^{1/p}}\right)^m.
	\end{align*}
	Then \eqref{E3.4.7} will reduce to
	\begin{equation}\label{E3.4.10}
		\left(\dfrac{rn}{m^{1/p}}\right)^m \leq \lambda \sup_{\|z\|_p<1} \left|\sum_{|\alpha|=m} \dfrac{m!}{\alpha!} \zeta_\alpha z^\alpha \right| \leq \lambda \sup_{\|z\|_p<1} \left|\sum_{|\alpha|=m} \dfrac{m!}{\alpha!}  z^\alpha \right|.
	\end{equation}
	As in the proof of Theorem 3.3 of \cite{DE04}, we have
	\begin{equation*}
		\sup_{\|z\|_p<1} \left|\sum_{|\alpha|=m} \dfrac{m!}{\alpha!} \zeta_\alpha z^\alpha \right| \leq m^{3/2} 2^{\frac{3m-1}{2}} \sqrt{\log n} \sup_{|\alpha|=m} \left(|a_\alpha| \sqrt{\alpha!/m!}\right) \sup_{\|z\|_p<1} \|z\|_2^{m-1} \sup_{\|z\|_p<1} \|z\|_1,
	\end{equation*}
	from which we can deduce that
	
	\begin{align*}
		(rn)^m &\leq \lambda m^{3/2} m^{1/p} 2^{\frac{3m-1}{2}} \sqrt{\log n \cdot (m!)} \sup_{\|z\|_p<1} \|z\|_2^{m-1} \sup_{\|z\|_p<1} \|z\|_1.
	\end{align*}
	If we take the $m$-th root, then we obtain that
	\begin{align}\label{E3.4.1}
		rn \leq  \left( \lambda m^{3/2} m^{1/p} 2^{\frac{3m-1}{2}} \sqrt{\log n \cdot (m!)}\right )^{1/m} \left(\dfrac{ \sup_{\|z\|_p<1} \|z\|_1}{\sup_{\|z\|_p<1} \|z\|_2}\right)^{1/m} \sup_{\|z\|_p<1} \|z\|_2.
	\end{align}
	Since for any vector $z=(z_1,\dots,z_n)$, we have the inequality
	\begin{equation*}
		\|z\|_\infty \leq \|z\|_1 \leq n\|z\|_\infty,
	\end{equation*}
	we deduce that 
	\begin{equation*}
		\left(\dfrac{ \sup_{\|z\|_p<t} \|z\|_1}{\sup_{\|z\|_p<t} \|z\|_2}\right)^{1/m} \leq  \left(\dfrac{ n\sup_{\|z\|_p<t} \|z\|_\infty}{\sup_{\|z\|_p<t} \|z\|_\infty}\right)^{1/m}. 
	\end{equation*}
	As a result, we can find a $t>0$ such that 
	\begin{align*}
		\left(\dfrac{ \sup_{\|z\|_p<1} \|z\|_1}{\sup_{\|z\|_p<1} \|z\|_2}\right)^{1/m} &= \left(\dfrac{ \sup_{\|z\|_p<t} \|z\|_1}{\sup_{\|z\|_p<t} \|z\|_2}\right)^{1/m} \leq n^{1/m}.
	\end{align*}
	Then the inequality \eqref{E3.4.1} will reduces to
	\begin{equation}\label{E3.4.3}
		rn \leq \left( \lambda m^{3/2} m^{1/p} 2^{\frac{3m-1}{2}} \sqrt{m!} (\log n)^{1/p+1/2} \cdot n\right )^{1/m} \sup_{\|z\|_p<1} \|z\|_2.
	\end{equation}
	Also from Theorem 4.2 of \cite{DE03}, for each $n\geq 2$, there exists an $m$ such that 
	\begin{equation}\label{E3.4.4}
		\left(m^{3/2} m^{1/p}2^{\frac{3m-1}{2}}\sqrt{m!}\sqrt{\log n} \cdot n\right)^{1/m} < e^3 2^{3/2} \sqrt{\log n}.
	\end{equation}
	Applying  \eqref{E3.4.4} to \eqref{E3.4.3} along with the fact that $\lambda^{1/m} \leq \lambda$, we obtain that
	$$rn \leq \lambda (\log n)^{1/p}  e^3 2^{3/2} \sqrt{\log n} \sup_{\|z\|_p<1} \|z\|_2.$$
	Since
	\begin{equation*}
		\sup_{\|z\|_p<1} \|z\|_2 \leq \begin{cases}
			1, &~ \text{if}~ 1 \leq p \leq 2,\\
			n^{\frac{1}{2}-\frac{1}{p}}, &~ \text{if} ~ 2 \leq p < \infty,
		\end{cases}
	\end{equation*}
	we have the required upper bound. Hence, the proof is completed.
	\hfill{$\Box$}
	
	\subsection{\bf Proof of Theorem \ref{THM2.15}} We use the similar idea of Corollary 5.4 of \cite{DE03}. By Lemma \ref{L3.5.1}, we have
	\begin{align*}
		B(B_{W_n}, X, \lambda)  B(B_{W_n^*}, X, \lambda)\leq (8\lambda^2 e^6  \log n )\dfrac{ \sup _{\|z\| <1 }\|z\|_2 \cdot \sup _{\|z\|^* <1 }\|z\|_2}{\sup _{\|z\| <1 }\|z\|_1 \cdot \sup _{\|z\|^* <1 }\|z\|_1},
	\end{align*}
	where $\|\cdot\|$ and $\| \cdot \|^*$ denote the norms on the spaces $W_n$ and $W_n^*$ respectively.
	It is trivial that
	\begin{equation*}
		\sup _{\|z\| <1 }\|z\|_1 = \|I : W_n \to l^n_1\| ~ \text{and} ~ \sup _{\|z\| <1 }\|z\|_2 = \|I : W_n \to l^n_2\|,
	\end{equation*}
	where $I$ denotes the identity operator. Now we consider the Banach-Mazur distance between to normed spaces $X$ and $Y$, which is defined as
	\begin{equation*}
		d(X,Y) := \inf \{\|T\| \|T^{-1}\|: T ~\text{is an invertible operator from}~ X ~\text{to} ~Y\}.
	\end{equation*}
	Combining the identities (5.5) and (5.6) of \cite{DE03}, we can write
	\begin{equation*}
		B(B_{W_n}, X, \lambda)  B(B_{W_n^*}, X, \lambda)\leq \dfrac{(8\lambda^2 e^6  \log n ) d(W_n,l_2^n)}{n}.
	\end{equation*}
	Use the estimate  $d(W_n,l_2^n) \leq \sqrt{n}$ from \cite[p.249]{TO89} to obtain  
	\begin{equation*}
		B(B_{W_n}, X, \lambda)  B(B_{W_n^*}, X, \lambda)\leq \dfrac{(8\lambda^2 e^6  \log n )}{\sqrt{n}}.
	\end{equation*}
	Since we have $$\lim_{n \to \infty} \dfrac{\log n}{\sqrt{n}}=0,$$ the proof is completed.
	\hfill{$\Box$}

	\bigskip
	\noindent 
	\\
	{\bf Acknowledgment.}  The work of the first author is supported by SERB-SRG (SRG/2023/001938). The work of the second author is supported by an INSPIRE fellowship of the Department of Science and Technology, Govt. of India (DST/INSPIRE Fellowship/2021/IF210612).

	\subsection*{Conflict of Interests}
	The authors declare that there is no conflict of interests regarding the publication of this paper.
	
	\subsection*{Data Availability Statement}
	The authors declare that this research is purely theoretical and does not associate with any data.

\end{document}